\documentclass[12pt]{article}

\usepackage[a4paper,margin=1in]{geometry}
\usepackage{amsmath,amssymb,amsthm,mathtools}
\usepackage{enumitem}
\usepackage{booktabs}
\usepackage[colorlinks=true,allcolors=blue]{hyperref}
\usepackage{microtype}

\hypersetup{
  pdftitle={Late-Time Fractional-Order Identification in Diffusion},
  pdfauthor={Niyaz Tokmagambetov},
  pdfkeywords={Caputo derivative, fractional diffusion, inverse problem, Mittag-Leffler function, order identification, resolvent moment, stability, sharpness, counterexample}
}

\theoremstyle{plain}
\newtheorem{theorem}{Theorem}[section]
\newtheorem{lemma}[theorem]{Lemma}
\newtheorem{proposition}[theorem]{Proposition}
\newtheorem{corollary}[theorem]{Corollary}
\theoremstyle{definition}
\newtheorem{assumption}[theorem]{Assumption}
\newtheorem{example}[theorem]{Example}
\theoremstyle{remark}
\newtheorem{remark}[theorem]{Remark}

\newcommand{\R}{\mathbb{R}}
\newcommand{\N}{\mathbb{N}}
\newcommand{\Dom}{\operatorname{Dom}}
\newcommand{\dd}{\,\mathrm{d}}
\newcommand{\cA}{\mathcal{A}}

\newcommand{\inner}[2]{\langle #1,#2\rangle}
\newcommand{\norm}[1]{\left\lVert #1\right\rVert}
\newcommand{\abs}[1]{\left\lvert #1\right\rvert}

\title{Late-Time Fractional-Order Identification in Caputo Diffusion Equation}

\author{\normalsize Niyaz Tokmagambetov\\[2mm]
\small Institute of Mathematics and Mathematical Modeling\\
\small 28 Shevchenko Street, 050010 Almaty, Kazakhstan\\
\small \texttt{tokmagambetov@math.kz}\\
\small \texttt{niyaz.tokmagambetov@gmail.com}}

\date{}

\begin{document}
\maketitle

\begin{abstract}
We study late-time identification of the Caputo order in a linear diffusion equation generated by a strictly positive self-adjoint operator with compact resolvent. For signed scalar observations
\(M_\alpha(t)=\sum_n a_nE_{\alpha,1}(-\lambda_nt^\alpha)\) satisfying \(\sum_n|a_n|/\lambda_n<\infty\), we show that, after eigenspace grouping, every nontrivial observation has a finite first nonzero resolvent moment \(S_m=\sum_n a_n/\lambda_n^m\). A uniform differentiated large-argument expansion of the Mittag-Leffler factor yields eventual strict monotonicity of \(\alpha\mapsto M_\alpha(t)\) on admissible intervals avoiding the zeros of \(1/\Gamma(1-m\alpha)\), hence uniqueness from one sufficiently late scalar measurement. For two measurements,
\(M_\alpha(\rho t)/M_\alpha(t)=\rho^{-m\alpha}(1+O(t^{-\alpha_0}))\), giving a log-ratio estimator with asymptotic-bias and relative-noise error bounds. For bounded observations, \(S_m=\langle\mathcal A^{-m}\varphi,h\rangle\); for a finite rod, the leading point-sensor condition is \((\mathcal A^{-1}\varphi)(x_*)\ne0\). Counterexamples show the sharpness of the exclusions and noise interpretation.
\end{abstract}

\noindent\textbf{Keywords:} Caputo derivative; fractional diffusion; inverse problem; Mittag-Leffler function; order identification; resolvent moment; stability.

\medskip
\noindent\textbf{2020 Mathematics Subject Classification:} 35R30; 35K90; 45K05; 26A33; 33E12.

\medskip
\noindent\textbf{Funding:} This research was funded by the Science Committee of the Ministry of Science and Higher Education of the Republic of Kazakhstan, Grant No. AP23487589.

\bigskip
\section{Introduction}

Time-fractional diffusion equations are common macroscopic models for anomalous transport, viscoelastic relaxation, and diffusion in media with memory. Fractional time derivatives entered continuum modeling through Caputo's work on nearly frequency-independent dissipation and later became standard in fractional diffusion and random-walk descriptions of anomalous transport; see \cite{Caputo1967,Wyss1986,SchneiderWyss1989,MetzlerKlafter2000,Mainardi2010}. In the simplest subdiffusive model the first-order time derivative in the heat equation is replaced by the Caputo derivative of order \(\alpha\in(0,1)\). For an absolutely continuous scalar- or Hilbert-space-valued function \(u\), it is given by
\begin{equation}\label{eq:caputo}
        \partial_t^\alpha u(t)
        =\frac{1}{\Gamma(1-\alpha)}\int_0^t (t-s)^{-\alpha}u'(s)\dd s.
\end{equation}
The forward solutions used below are mild solutions defined by spectral functional calculus; formula \eqref{eq:caputo} applies whenever the corresponding time regularity is available. Throughout the analytical sections, \(t\) is time measured relative to a fixed reference scale, so logarithms of time are dimensionless. The order \(\alpha\) determines the strength of memory and the algebraic relaxation rate. It is therefore a natural unknown parameter in calibration problems for fractional diffusion models.

The direct theory of fractional evolution equations is well developed; see, for example, \cite{SamkoKilbasMarichev1993,Podlubny1999,KilbasSrivastavaTrujillo2006,GorenfloKilbasMainardiRogosin2020,EidelmanKochubei2004,SakamotoYamamoto2011,GorenfloLuchkoYamamoto2015}. Inverse problems for fractional partial differential equations include identification of orders, potentials, source terms, and other coefficients; see \cite{JinRundell2015,LiYamamoto2015,LiImanuvilovYamamoto2016,LiHuangYamamoto2020,KianLiLiuYamamoto2021,KaltenbacherRundell2019,KaltenbacherRundell2023,LiuLiYamamoto2019Sources,LiLiuYamamoto2019Parameters}. The order-identification problem is analytically distinctive because the unknown parameter appears both in the memory kernel and in the special functions representing the solution. Classical results on complete monotonicity and integral representations of the Mittag-Leffler function provide the background; see \cite{Pollard1948,GorenfloMainardi1997,Mainardi2014,GorenfloLoutchkoLuchko2002}.

Late-time order recovery itself is established in earlier work. Hatano, Nakagawa, Wang, and Yamamoto \cite{HatanoNakagawaWangYamamoto2013} derived reconstruction formulas from the time history at one fixed spatial point using small- and large-time asymptotics. In particular, their large-time formula identifies the order from a logarithmic decay slope. Ashurov and Umarov \cite{AshurovUmarov2020} proved uniqueness from the solution value at a fixed time and monitoring location for a subdiffusion equation with a general second-order elliptic operator. Li, Wang, Jia, and Zhang \cite{LiWangJiaZhang2023} likewise studied a single space-time measurement, reduced the inverse problem to a nonlinear algebraic equation, and used strict monotonicity. Therefore neither one-point uniqueness nor the fact that the fractional order is encoded in an algebraic late-time decay is claimed as new here. When the first moment is nonzero, the log-ratio estimator below is a finite-difference analogue of the large-time logarithmic-slope formula in \cite{HatanoNakagawaWangYamamoto2013}.

Parameter differentiation of Mittag-Leffler-type functions has also been studied directly. Rogosin, Giraldi, and Mainardi \cite{RogosinGiraldiMainardi2025} justified differentiation with respect to parameters by uniform-convergence and Mellin--Barnes arguments. Recent works prove additional monotonicity properties in the fractional parameter; see Sulaymonov and Ashurov \cite{SulaymonovAshurov2025} and Ferreira and Simon \cite{FerreiraSimon2025}. The differentiated expansion proved below has a different purpose: it gives a uniform large-argument remainder for the composite map \(\alpha\mapsto E_{\alpha,1}(-\lambda t^\alpha)\), with bounds strong enough to be summed over a full spectral observation.

The specific contribution of this paper is the resolvent-moment mechanism for signed scalar observations. It permits cancellation between modes, identifies the first surviving power of \(t^{-\alpha}\), isolates the exceptional reciprocal-gamma orders, and yields a two-time relative-error estimate for an arbitrary cancellation index \(m\). A moment-completeness lemma shows that this index is finite for every nontrivial observation satisfying the summability assumption. Thus the novelty lies in the signed-observation and cancellation hierarchy, not in the basic late-time decay principle itself.

Consider the abstract homogeneous problem
\begin{equation}\label{eq:intro-forward}
        \partial_t^\alpha u_\alpha(t)+\cA u_\alpha(t)=0,
        \qquad u_\alpha(0)=\varphi,
\end{equation}
where \(\cA\) is positive self-adjoint with compact resolvent. If \(\lambda_n\) and \(v_n\) are its eigenvalues and eigenvectors, then
\[
        u_\alpha(t)=\sum_{n=1}^\infty
        \varphi_n E_{\alpha,1}(-\lambda_n t^\alpha)v_n,
        \qquad \varphi_n=\inner{\varphi}{v_n}.
\]
A scalar observation therefore has the form
\begin{equation}\label{eq:intro-M}
        M_\alpha(t)=\sum_{n=1}^\infty a_nE_{\alpha,1}(-\lambda_n t^\alpha).
\end{equation}
The inverse problem is to determine \(\alpha\) from one or more measured values of \(M_\alpha\).

The main point is that the late-time expansion of \eqref{eq:intro-M} is organized by the resolvent moments
\begin{equation}\label{eq:intro-moments}
        S_k=\sum_{n=1}^\infty \frac{a_n}{\lambda_n^k},\qquad k\ge1.
\end{equation}
If \(S_1\ne0\), the first moment controls the sign of \(\partial_\alpha M_\alpha(t)\) for large \(t\). If \(S_1=0\), uniqueness may still hold: the first non-zero moment \(S_m\) takes over, except at the zeros of the corresponding reciprocal-gamma coefficient. This observation removes an unnecessary limitation in first-moment-only statements.

The contributions of the paper are the following.
\begin{enumerate}[label=(\roman*)]
\item We prove a self-contained uniform differentiated large-time expansion for \(E_{\alpha,1}(-\lambda t^\alpha)\) to arbitrary order, including a remainder estimate after differentiation with respect to the composite fractional parameter.
\item We prove completeness of the resolvent moments: after coefficients belonging to the same eigenspace are grouped, every nontrivial observation has a finite cancellation index \(m=\min\{k:S_k\ne0\}\).
\item We derive eventual strict monotonicity for general signed scalar spectral observations, including the cancellation case \(S_1=\cdots=S_{m-1}=0\), \(S_m\ne0\), and identify the exceptional orders at which the leading reciprocal-gamma coefficient vanishes.
\item For bounded Hilbert-space observations \(M_\alpha(t)=\inner{u_\alpha(t)}{h}\), the moments take the intrinsic form \(S_m=\inner{\cA^{-m}\varphi}{h}\).
\item We prove a two-time log-ratio inverse estimate. If \(\rho>1\), then
\[
        \frac{M_\alpha(\rho t)}{M_\alpha(t)}
        =\rho^{-m\alpha}(1+o(1)),
        \qquad t\to\infty,
\]
and the associated estimator has a combined asymptotic-bias and relative-noise bound. The integer \(m\) is computed from the known forward configuration; it is not an additional unknown inferred from the two data values.
\item For a finite rod, we extend the point-sensor application to arbitrary \(L^2\)-initial data. The leading condition is \((\cA^{-1}\varphi)(x_*)\ne0\), which holds at every interior sensor for nonnegative, nonzero initial temperature.
\item We give explicit finite-dimensional and point-sensor counterexamples showing why eigenspace grouping, a summability hypothesis, reciprocal-gamma exclusions, late observation, stationary-mode subtraction, sensor nondegeneracy, and relative rather than fixed-additive noise control cannot in general be omitted.
\item We provide a reproducible numerical script, explicit quadrature tolerances, and a synthetic point-sensor illustration.
\end{enumerate}

\subsection*{Standard analytic inputs and the resolvent-moment mechanism}

The proof deliberately separates standard analytic background from the
paper-specific order-identification mechanism.  We use the usual spectral
functional calculus for strictly positive self-adjoint operators with compact
resolvent, the standard mild-solution formula for the homogeneous Caputo
evolution, complete monotonicity and rational bounds for
$E_{\alpha,1}(-r)$, and the classical large-argument Mittag--Leffler
asymptotic machinery from the references cited above.  In the rod application
we also use standard one-dimensional elliptic regularity and positivity of
the Dirichlet Green function.  These are background ingredients from the
cited literature; the new part of the paper is how they are combined with a
resolvent-moment cancellation hierarchy for signed observations.

The new argument is the following chain.  First, the differentiated
large-argument expansion is arranged uniformly in the admissible order
interval and in the spectral parameter, so that it may be summed over a
signed observation satisfying the weighted summability condition.  Second,
all coefficients are grouped by eigenspaces, and the moment-completeness
lemma shows that every nontrivial grouped observation has a finite first
nonzero resolvent moment.  Third, this first surviving moment gives eventual
one-time monotonicity away from the relevant reciprocal-gamma zeros.  Fourth,
the same leading term gives the two-time ratio
$M_\alpha(\rho t)/M_\alpha(t)=\rho^{-m\alpha}(1+o(1))$, with a relative-noise
stability estimate.  Finally, in the rod model, the abstract resolvent moment
becomes the point value of $\mathcal A^{-m}\varphi$, so the nondegeneracy
conditions are expressed by elliptic resolvents rather than by individual
Fourier coefficients.

All uniqueness and stability statements are therefore late-time statements for a
known forward configuration: the operator, initial state, observation
functional, and hence the resolvent moments are regarded as known, while the
fractional order is unknown.  This convention is essential in the two-time
ratio formula, where the cancellation index $m$ is computed from the forward
configuration and is not estimated from the two scalar measurements.

The sharpness examples are part of the main structure, not an afterthought:
they show that eigenspace grouping, the summability condition, reciprocal
gamma exclusions, the late-time threshold, stationary-mode subtraction,
sensor nondegeneracy, and the relative-noise interpretation are all genuinely
needed in the stated generality.

\section{Forward problem and scalar observations}\label{sec:forward}

Let \(H\) be a real separable Hilbert space with inner product \(\inner{\cdot}{\cdot}\) and norm \(\norm{\cdot}\).

\begin{assumption}\label{ass:A}
The operator \(\cA:\Dom(\cA)\subset H\to H\) is self-adjoint, strictly positive, and has compact resolvent. Its spectrum consists either of a finite positive list
\[
        0<\lambda_1\le\cdots\le\lambda_N,
\]
with an orthonormal eigenbasis \(\{v_n\}_{n=1}^N\), or, in the infinite-dimensional case, of eigenvalues
\begin{equation}\label{eq:eigenvalues}
        0<\lambda_1\le \lambda_2\le\cdots\le \lambda_n\le\cdots,
        \qquad \lambda_n\to\infty,
\end{equation}
with an orthonormal basis \(\{v_n\}_{n\ge1}\). In either case,
\begin{equation}\label{eq:eigenvectors}
        \cA v_n=\lambda_n v_n.
\end{equation}
In the finite-dimensional case, every spectral sum below is understood to terminate at \(N\).
\end{assumption}

The strict positivity assumption removes stationary modes. If a concrete model has a finite-dimensional kernel, write the observation as \(M_\alpha(t)=M_0+M_\alpha^\perp(t)\), where \(M_0\) is the stationary contribution and \(M_\alpha^\perp\) is obtained by projection onto the orthogonal complement of the kernel. The one-time monotonicity results apply to \(M_\alpha^\perp\), and also to the full observation because \(M_0\) is independent of \(\alpha\). The ratio results, however, must be applied to the projected or stationary-subtracted quantity: if \(M_0\ne0\), then the uncorrected ratio \(M_\alpha(\rho t)/M_\alpha(t)\) tends to \(1\), not to \(\rho^{-m\alpha}\).

For \(0<\alpha<1\) and \(\varphi\in H\), consider
\begin{equation}\label{eq:abstract-problem}
\begin{cases}
        \partial_t^\alpha u_\alpha(t)+\cA u_\alpha(t)=0, & t>0,\\
        u_\alpha(0)=\varphi.
\end{cases}
\end{equation}
Writing \(\varphi_n=\inner{\varphi}{v_n}\), the solution is given by a Mittag-Leffler spectral expansion.

\begin{proposition}\label{prop:forward}
Under Assumption \ref{ass:A}, for every \(\varphi\in H\) and \(\alpha\in(0,1)\), problem \eqref{eq:abstract-problem} has a unique mild solution \(u_\alpha\in C([0,\infty);H)\),
\begin{equation}\label{eq:spectral-solution}
        u_\alpha(t)=\sum_{n=1}^\infty
        \varphi_n E_{\alpha,1}(-\lambda_n t^\alpha)v_n,
        \qquad t\ge0.
\end{equation}
For every finite \(T>0\), the series converges in \(C([0,T];H)\).
\end{proposition}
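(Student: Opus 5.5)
We will work throughout with the spectral decomposition from Assumption \ref{ass:A}, reducing the abstract problem to the scalar relaxation equations \(\partial_t^\alpha w+\lambda_n w=0\), \(w(0)=\varphi_n\). The scalar facts we use are standard: \(w_n(t)=\varphi_nE_{\alpha,1}(-\lambda_nt^\alpha)\) solves this scalar problem. The function \(r\mapsto E_{\alpha,1}(-r)\) is completely monotone on \([0,\infty)\) (Pollard), with \(E_{\alpha,1}(0)=1\). Hence \(0<E_{\alpha,1}(-\lambda_nt^\alpha)\le1\) for all \(t\ge0\), and each map \(t\mapsto E_{\alpha,1}(-\lambda_nt^\alpha)\) is continuous on \([0,\infty)\). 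Accordingly, we define the mild solution as the spectral functional calculus expression \(u_\alpha(t)=E_{\alpha,1}(-t^\alpha\cA)\varphi\). Equivalently, \(u_\alpha\) is the unique \(C([0,\infty);H)\) function whose Fourier coefficients \(\inner{u_\alpha(t)}{v_n}\) solve the scalar problems, which are uniquely solvable as linear Volterra equations \(w=\varphi_n-\lambda_n I^\alpha w\).

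For convergence, let \(P_K\) be the partial sums of \eqref{eq:spectral-solution}. By orthonormality and the bound \(|E_{\alpha,1}|\le1\),
\[
\sup_{t\in[0,T]}\norm{u_\alpha(t)-P_K(t)}^2=\sup_{t\in[0,T]}\sum_{n>K}\varphi_n^2E_{\alpha,1}(-\lambda_nt^\alpha)^2\le\sum_{n>K}\varphi_n^2\to0.
\]
The tail is therefore controlled uniformly on all of \([0,\infty)\), not just on \([0,T]\). Each \(P_K\) is a finite sum of continuous \(H\)-valued functions, so the uniform limit \(u_\alpha\) lies in \(C([0,\infty);H)\), and the series converges in \(C([0,T];H)\) for every \(T\). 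We also get \(u_\alpha(0)=\sum_n\varphi_nv_n=\varphi\) by Parseval, and \(\norm{u_\alpha(t)}\le\norm{\varphi}\).

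To show \(u_\alpha\) is a mild solution, we check the integrated form \(u_\alpha(t)+\cA I^\alpha u_\alpha(t)=\varphi\). Here \(\cA\) is closed, and we interpret \(\cA I^\alpha u_\alpha\) through the identity \(I^\alpha u_\alpha=\cA^{-1}(\varphi-u_\alpha)\), with \(I^\alpha\) the Riemann–Liouville integral. This identity holds coefficientwise from the scalar equations. The operator \(I^\alpha\) commutes with taking the inner product against \(v_n\), since it is a Bochner integral of a continuous function, and \(\cA^{-1}\) is bounded. Hence \(I^\alpha u_\alpha(t)\in\Dom(\cA)\) and the identity holds in \(H\).

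For uniqueness, let \(z\) be the difference of two mild solutions. Its coefficients \(z_n(t)=\inner{z(t)}{v_n}\) satisfy \(z_n=-\lambda_nI^\alpha z_n\) with zero data, by self-adjointness: \(\inner{\cA I^\alpha z}{v_n}=\lambda_n I^\alpha z_n\). A Gronwall-type argument for Abel kernels (or Laplace transform) then gives \(z_n\equiv0\), so \(z=0\) by completeness of \(\{v_n\}\).

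The main subtlety is the choice of the mild-solution notion. The Caputo formula \eqref{eq:caputo} needs absolute continuity of \(u_\alpha\), which fails near \(t=0\) for general \(\varphi\in H\). We therefore state everything in the integrated Volterra form, as the paper indicates, rather than attempting pointwise differentiability of the series.
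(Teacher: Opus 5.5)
Your proposal is correct and takes essentially the paper's route: reduce to the scalar relaxation equations, use the bound $0<E_{\alpha,1}(-r)\le1$ from complete monotonicity, apply Parseval to get uniform convergence of the tail, and prove uniqueness mode by mode. What you add is a precise definition of the mild solution through the Volterra form $u_\alpha+\cA I^\alpha u_\alpha=\varphi$, checked coefficientwise via $I^\alpha u_\alpha=\cA^{-1}(\varphi-u_\alpha)$; the paper leaves this notion implicit in its spectral definition.
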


\begin{proof}
Expanding \(u_\alpha(t)=\sum_n u_n(t)v_n\) reduces \eqref{eq:abstract-problem} to
\[
        \partial_t^\alpha u_n(t)+\lambda_nu_n(t)=0,
        \qquad u_n(0)=\varphi_n.
\]
The scalar Caputo equation has solution \(u_n(t)=\varphi_nE_{\alpha,1}(-\lambda_nt^\alpha)\). Since \(0<E_{\alpha,1}(-r)\le1\) for \(r\ge0\) and \(0<\alpha<1\) by complete monotonicity \cite{Pollard1948,Schneider1996}, Parseval's identity gives convergence in \(H\), uniformly for \(t\in[0,T]\). Uniqueness follows mode by mode.
\end{proof}

We consider scalar observations of the form
\begin{equation}\label{eq:M-def}
        M_\alpha(t)=\sum_{n=1}^\infty a_nE_{\alpha,1}(-\lambda_n t^\alpha),
        \qquad t>0.
\end{equation}
The inverse problem is to recover \(\alpha\) from measured data such as
\begin{equation}\label{eq:one-data}
        M_\alpha(t_0)=d_0
\end{equation}
or from a two-time pair \(M_\alpha(t_0)=d_1\), \(M_\alpha(\rho t_0)=d_2\) with \(\rho>1\).

The following coefficient condition is used for general spectral observations.

\begin{assumption}\label{ass:moment-finite}
The coefficients \(\{a_n\}_{n\ge1}\subset\R\) satisfy
\begin{equation}\label{eq:first-absolute-moment}
        \sum_{n=1}^\infty \frac{\abs{a_n}}{\lambda_n}<\infty.
\end{equation}
For \(k\ge1\), define the resolvent moments
\begin{equation}\label{eq:S-k-def}
        S_k=\sum_{n=1}^\infty \frac{a_n}{\lambda_n^k}.
\end{equation}
\end{assumption}

Condition \eqref{eq:first-absolute-moment} implies that every \(S_k\) is finite, because \(\lambda_n\ge\lambda_1>0\). The same assumption also makes the observation \(M_\alpha(t)\) well defined for positive times. Indeed, the standard estimate
\begin{equation}\label{eq:ML-rational-bound}
        \abs{E_{\alpha,1}(-r)}\le \frac{C_I}{1+r},
        \qquad r\ge0,\quad \alpha\in I\Subset(0,1),
\end{equation}
which follows from the complete monotonicity and integral representations of the Mittag-Leffler function \cite{Pollard1948,GorenfloKilbasMainardiRogosin2020}, implies absolute and locally uniform convergence of \eqref{eq:M-def} on \(I\times[\tau,\infty)\) for every \(\tau>0\). In particular, the late-time asymptotic manipulations below are applied to a genuine scalar observation, not only to a formal series.

\subsection{Completeness of the resolvent moments}

Because eigenvalues may have multiplicity, coefficients in the same eigenspace must first be grouped. Let \(\{\mu_j\}_{j\in J}\) be the distinct eigenvalues of \(\cA\), and set
\begin{equation}\label{eq:grouped-coefficients}
        B_j=\sum_{\{n:\lambda_n=\mu_j\}}a_n.
\end{equation}
Each sum is finite, and Assumption \ref{ass:moment-finite} gives
\[
        \sum_{j\in J}\frac{\abs{B_j}}{\mu_j}
        \le \sum_{n=1}^\infty\frac{\abs{a_n}}{\lambda_n}<\infty.
\]
Hence the observation and its moments can be regrouped as
\begin{equation}\label{eq:grouped-observation-moments}
        M_\alpha(t)=\sum_{j\in J}B_jE_{\alpha,1}(-\mu_jt^\alpha),
        \qquad
        S_k=\sum_{j\in J}\frac{B_j}{\mu_j^k}.
\end{equation}

\begin{lemma}[Moment completeness and the cancellation index]\label{lem:moment-completeness}
Under Assumption \ref{ass:moment-finite}, if at least one grouped coefficient \(B_j\) is nonzero, then the moments \(S_k\) cannot all vanish. Consequently,
\begin{equation}\label{eq:cancellation-index}
        m_*:=\min\{k\in\N:S_k\ne0\}
\end{equation}
is a finite positive integer for every nontrivial grouped spectral observation.
\end{lemma}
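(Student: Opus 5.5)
We argue by contradiction, treating the moments as values of a generating function. Suppose \(S_k=0\) for every \(k\ge1\). Define
\[
        F(z)=\sum_{j\in J}\frac{B_j}{\mu_j-z},
\]
which, by the summability \(\sum_j\abs{B_j}/\mu_j<\infty\) and \(\mu_j\ge\mu_{\min}>0\), converges absolutely and locally uniformly on \(\mathbb{C}\setminus\{\mu_j\}_{j\in J}\). This uses that the \(\mu_j\) are isolated and, in the infinite case, tend to infinity, so for \(z\) in a compact set avoiding the spectrum \(\abs{\mu_j-z}\ge c\,\mu_j\) with \(c>0\). Hence \(F\) is meromorphic on \(\mathbb{C}\), with at most simple poles at the \(\mu_j\). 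The residue at \(\mu_j\) is \(-B_j\), because the remaining terms of the sum are holomorphic near \(\mu_j\) by the same uniform bound.

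Next we compute the Taylor expansion of \(F\) at \(z=0\). For \(\abs{z}<\mu_{\min}/2\),
\[
        \frac{1}{\mu_j-z}=\sum_{k\ge0}\frac{z^k}{\mu_j^{k+1}},
\]
and the double sum \(\sum_j\sum_k \abs{B_j}\abs{z}^k/\mu_j^{k+1}\) is bounded by \(2\sum_j\abs{B_j}/\mu_j<\infty\). Fubini therefore permits interchanging the sums, which gives
\[
        F(z)=\sum_{k\ge0}S_{k+1}z^k .
\]
If all \(S_k\) vanish, then \(F\equiv0\) on a disc around \(0\). The domain \(\mathbb{C}\setminus\{\mu_j\}\) is connected, since it is the complement of a discrete closed set in the plane. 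By the identity theorem, \(F\equiv0\) on the whole domain, so every residue vanishes, i.e. \(B_j=0\) for all \(j\). This contradicts the hypothesis that some \(B_j\ne0\). Therefore some \(S_k\ne0\), and the minimum in \eqref{eq:cancellation-index} is attained at a finite positive integer.

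The main point needing care is the justification that \(F\) is meromorphic with the stated residues under only the weighted \(\ell^1\) condition. This requires the uniform estimate \(\abs{\mu_j-z}^{-1}\le C_K\mu_j^{-1}\) for \(z\) in a compact set \(K\) away from the spectrum, together with the discreteness of \(\{\mu_j\}\) coming from the compact resolvent in Assumption \ref{ass:A}. In the finite-dimensional case \(F\) is simply a rational function, and the argument reduces to a Vandermonde-type statement: the vectors \((\mu_j^{-k})_k\) for distinct \(\mu_j\) are linearly independent. The grouping step is essential, because without it cancellations within an eigenspace (for example \(a_1=-a_2\) with \(\lambda_1=\lambda_2\)) would make all moments vanish for a nonzero coefficient sequence.
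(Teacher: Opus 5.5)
Your proof is correct, but it takes a different route from the paper's.

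\textbf{The paper's route.} The paper stays on the real line. It packages the grouped data into the finite signed measure $\nu=\sum_j (B_j/\mu_j)\,\delta_{1/\mu_j}$ on $[0,\lambda_1^{-1}]$ and notes that $S_k=\int x^{k-1}\,d\nu(x)$. If all moments vanish, then $\nu$ annihilates every polynomial. By the Weierstrass approximation theorem, and uniqueness of a measure determined by its action on continuous functions, this gives $\nu=0$. Hence every atom $B_j/\mu_j$ vanishes.

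\textbf{Your route.} You use the Stieltjes/resolvent transform $F(z)=\sum_j B_j/(\mu_j-z)$. Its Taylor coefficients at $0$ are $S_{k+1}$, and you recover the $B_j$ as residues after analytic continuation. Each step you justify holds up:
\begin{itemize}
\item the bound $|\mu_j-z|^{-1}\le C_K\mu_j^{-1}$ on compacta away from the spectrum;
\item the Fubini interchange on $|z|<\mu_{\min}/2$;
\item connectedness of $\mathbb{C}\setminus\{\mu_j\}$;
\item the residue $-B_j$ at $\mu_j$.
\end{itemize}

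\textbf{What each buys.} The paper's argument is shorter and more general. It only needs the points $1/\mu_j$ to be distinct and to lie in a compact interval with summable weights. It requires neither isolation of the eigenvalues nor $\mu_j\to\infty$, so it is essentially a Hausdorff-moment uniqueness statement. It would survive accumulating or even continuous spectral data.

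Your argument genuinely uses the discreteness supplied by the compact resolvent in Assumption~\ref{ass:A}. In return, it makes the name ``resolvent moments'' literal. For bounded observations, $F(z)=\langle(\cA-z)^{-1}\varphi,h\rangle$, and the $S_k$ are its Taylor coefficients at $z=0$. It also gives an explicit mechanism, analytic continuation followed by residues, for reading off each grouped coefficient from the moment sequence. Your finite-dimensional Vandermonde remark is correct.
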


\begin{proof}
Define a finite signed Borel measure on \([0,\lambda_1^{-1}]\) by
\begin{equation}\label{eq:moment-measure}
        \nu=\sum_{j\in J}\frac{B_j}{\mu_j}\,\delta_{1/\mu_j}.
\end{equation}
Its total variation is finite by the preceding estimate. If \(S_k=0\) for every \(k\ge1\), then
\[
        \int_0^{\lambda_1^{-1}}x^{k-1}\dd\nu(x)=S_k=0,
        \qquad k\ge1.
\]
Thus \(\nu\) annihilates every polynomial. By the Weierstrass approximation theorem, polynomials are dense in \(C([0,\lambda_1^{-1}])\); since integration against \(\nu\) is a bounded linear functional, \(\nu\) annihilates every continuous function. Hence \(\nu=0\). Its atoms are at the distinct points \(1/\mu_j\), so \(B_j/\mu_j=0\) for every \(j\), contradicting the assumed nontriviality. Therefore at least one moment is nonzero, and the well-ordering of \(\N\) gives \eqref{eq:cancellation-index}.
\end{proof}

\begin{remark}[The index \(m\) is part of the known forward configuration]\label{rem:m-known}
The inverse problem considered here treats \(\cA\), the initial state, and the observation functional as known; only \(\alpha\) is unknown. Accordingly, the coefficients \(a_n\), the moments \(S_k\), and the cancellation index \(m=m_*\) are determined before the two-time estimator is applied. The factor \(m\) in that estimator is therefore not inferred from two scalar measurements. If the forward configuration is itself uncertain, joint estimation of \(m\) or of the coefficients becomes a different inverse problem.
\end{remark}

\subsection{Bounded Hilbert-space observations}

A particularly natural case is
\begin{equation}\label{eq:bounded-observation}
        M_\alpha(t)=\inner{u_\alpha(t)}{h},
        \qquad h\in H.
\end{equation}
Then \(a_n=\varphi_nh_n\), where \(h_n=\inner{h}{v_n}\). By Cauchy's inequality,
\begin{equation}\label{eq:bounded-summability}
        \sum_{n=1}^\infty \frac{\abs{a_n}}{\lambda_n}
        \le \frac{1}{\lambda_1}
        \left(\sum_{n=1}^\infty \abs{\varphi_n}^2\right)^{1/2}
        \left(\sum_{n=1}^\infty \abs{h_n}^2\right)^{1/2}
        =\frac{\norm{\varphi}\norm{h}}{\lambda_1}.
\end{equation}
Moreover,
\begin{equation}\label{eq:resolvent-moment}
        S_k=\sum_{n=1}^\infty\frac{\varphi_nh_n}{\lambda_n^k}
        =\inner{\cA^{-k}\varphi}{h},
        \qquad k\ge1.
\end{equation}
Thus the non-degeneracy conditions below can be written intrinsically in terms of the resolvents of \(\cA\).

\section{Differentiated Mittag-Leffler asymptotics}\label{sec:ml}

The parameter differentiability of Mittag-Leffler-type functions is discussed in \cite{RogosinGiraldiMainardi2025}. The result below is tailored to the inverse problem: it controls the large-argument expansion and its \(\alpha\)-derivative uniformly in both the order and the spectral parameter.

Set
\begin{equation}\label{eq:c-k}
        c_k(\alpha)=\frac{(-1)^{k+1}}{\Gamma(1-k\alpha)},
        \qquad k\in\N.
\end{equation}
Since \(1/\Gamma\) is an entire function, \(c_k\) is smooth on \((0,1)\), even when \(1-k\alpha\) is a non-positive integer.

\begin{theorem}[Uniform differentiated expansion]\label{thm:ML-expansion}
Let \(I=[\alpha_0,\alpha_1]\Subset(0,1)\), \(\lambda_0>0\), and \(N\in\N\). There exist constants \(C=C(I,\lambda_0,N)>0\) and \(T=T(I,\lambda_0,N)>1\) such that, for all \(\alpha\in I\), \(\lambda\ge\lambda_0\), and \(t\ge T\),
\begin{equation}\label{eq:ML-expansion-N}
        E_{\alpha,1}(-\lambda t^\alpha)
        =\sum_{k=1}^{N}c_k(\alpha)\lambda^{-k}t^{-k\alpha}+R_N(\alpha,\lambda,t),
\end{equation}
where
\begin{equation}\label{eq:R-N-bound}
        \abs{R_N(\alpha,\lambda,t)}
        \le C\lambda^{-(N+1)}t^{-(N+1)\alpha}.
\end{equation}
Furthermore,
\begin{equation}\label{eq:dML-expansion-N}
\begin{aligned}
        \partial_\alpha E_{\alpha,1}(-\lambda t^\alpha)
        &=\sum_{k=1}^{N}\lambda^{-k}t^{-k\alpha}
          \bigl(c_k'(\alpha)-k c_k(\alpha)\log t\bigr)\\
        &\quad +\partial_\alpha R_N(\alpha,\lambda,t),
\end{aligned}
\end{equation}
with
\begin{equation}\label{eq:dR-N-bound}
        \abs{\partial_\alpha R_N(\alpha,\lambda,t)}
        \le C(1+\log t)\lambda^{-(N+1)}t^{-(N+1)\alpha}.
\end{equation}
\end{theorem}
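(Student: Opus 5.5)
We use an integral representation of $E_{\alpha,1}(-r)$ for $r>0$ in which $\alpha$ enters analytically and the large-$r$ behaviour can be read off by expanding a geometric series. For $0<\alpha<1$ the standard representation (complete monotonicity, cf. \cite{Pollard1948,GorenfloKilbasMainardiRogosin2020}) is
\[
E_{\alpha,1}(-r)=\int_0^\infty e^{-rs}K_\alpha(s)\dd s,\qquad
K_\alpha(s)=\frac{1}{\pi}\frac{s^{\alpha-1}\sin(\pi\alpha)}{s^{2\alpha}+2s^{\alpha}\cos(\pi\alpha)+1}.
\]
It is cleaner, however, to work with the equivalent form obtained from the Hankel-contour representation,
\[
E_{\alpha,1}(-r)=\frac{1}{2\pi i}\int_{\mathrm{Ha}}\frac{e^{\zeta}\zeta^{\alpha-1}}{\zeta^\alpha+r}\dd\zeta .
\]
On this form we insert the finite geometric identity
\[
\frac{1}{\zeta^\alpha+r}=\sum_{k=1}^N(-1)^{k-1}\frac{\zeta^{(k-1)\alpha}}{r^k}+(-1)^N\frac{\zeta^{N\alpha}}{r^N(\zeta^\alpha+r)}.
\]
Hankel's formula $\frac{1}{2\pi i}\int_{\mathrm{Ha}}e^\zeta\zeta^{-s}\dd\zeta=1/\Gamma(s)$ with $s=1-k\alpha$ produces exactly the terms $c_k(\alpha)r^{-k}$; the sign bookkeeping $(-1)^{k-1}=(-1)^{k+1}$ matches \eqref{eq:c-k}. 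The remainder is
\[
R_N=\frac{(-1)^N}{r^N}\,\frac{1}{2\pi i}\int_{\mathrm{Ha}}\frac{e^\zeta\zeta^{(N+1)\alpha-1}}{\zeta^\alpha+r}\dd\zeta .
\]
We fix the contour independently of $\alpha\in I$ and $r$: two rays $\arg\zeta=\pm\theta$ with $\pi/2<\theta<\pi$ and a small circle $|\zeta|=1$. The angle $\theta$ is chosen so that $\alpha_1\theta<\pi$, which gives $|\arg\zeta^\alpha|\le\alpha_1\theta<\pi$. This yields the key uniform lower bound $|\zeta^\alpha+r|\ge c_I(|\zeta|^\alpha+r)\ge c_I r$, with $c_I>0$ depending only on $I$. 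Hence
\[
|R_N|\le C r^{-N-1}\int_{\mathrm{Ha}}|e^\zeta||\zeta|^{(N+1)\alpha-1}|\dd\zeta| .
\]
The integral is bounded uniformly for $\alpha\in I$, because $|e^\zeta|$ decays exponentially along the rays. Setting $r=\lambda t^\alpha$ gives \eqref{eq:R-N-bound}, in fact for all $r\ge1$, so any $T$ with $\lambda_0T^{\alpha_0}\ge1$ works.

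For the derivative we differentiate the composite $R_N(\alpha,\lambda,t)=\widetilde R_N(\alpha,r)$ with $r=\lambda t^\alpha$, using the chain rule:
\[
\partial_\alpha R_N=(\partial_\alpha\widetilde R_N)(\alpha,r)+\lambda t^\alpha\log t\,(\partial_r\widetilde R_N)(\alpha,r).
\]
Expression \eqref{eq:dML-expansion-N} then follows by differentiating \eqref{eq:ML-expansion-N} term by term, since $\partial_\alpha(t^{-k\alpha})=-k\log t\,t^{-k\alpha}$. It therefore suffices to prove two bounds:
\[
|\partial_\alpha\widetilde R_N|\le Cr^{-N-1},\qquad |r\,\partial_r\widetilde R_N|\le Cr^{-N-1}.
\]
Since $\widetilde R_N=E_{\alpha,1}(-r)-\sum_k c_k r^{-k}$, both derivatives can be taken directly on the explicit remainder integral, with differentiation under the integral justified by dominated convergence on the fixed contour.

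For the $r$-derivative, $\partial_r$ hits $r^{-N}$ and $(\zeta^\alpha+r)^{-1}$. The first gives $-Nr^{-1}$ times the previous quantity. The second gives $-(\zeta^\alpha+r)^{-2}$, and the lower bound controls it by $r^{-1}\cdot r^{-1}$. Altogether $|r\partial_r\widetilde R_N|\le Cr^{-N-1}$.

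The $\alpha$-derivative is the main obstacle, because it produces factors $\log\zeta$ both from $\zeta^{(N+1)\alpha-1}$ and from $\partial_\alpha\zeta^\alpha=\zeta^\alpha\log\zeta$ in the denominator. The former is harmless: $|\log\zeta|\le|\log|\zeta||+\pi$ is integrable against $|e^\zeta||\zeta|^{(N+1)\alpha-1}$ uniformly in $I$. For the latter we use
\[
\left|\frac{\zeta^\alpha\log\zeta}{\zeta^\alpha+r}\right|\le C|\log\zeta|,
\]
which follows from $|\zeta^\alpha+r|\ge c_I|\zeta|^\alpha$, again part of the same uniform lower bound. Both contributions are therefore $O(r^{-N-1})$ uniformly for $\alpha\in I$ and $r\ge1$.

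Combining, $|\partial_\alpha R_N|\le C(1+\log t)r^{-N-1}$ with $r=\lambda t^\alpha$, which is \eqref{eq:dR-N-bound}. The smoothness of $c_k$ via the entire function $1/\Gamma$ makes the main terms unproblematic even at the reciprocal-gamma zeros.
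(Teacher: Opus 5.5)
Your proof is correct, but it takes a genuinely different route from the paper. The paper works on the real line. It starts from Pollard's completely monotone representation and rescales $s=xr^\alpha$, so that the large parameter sits in the real rational kernel $1/D_\alpha(s/x)$ with $D_\alpha(y)=1+2y\cos(\pi\alpha)+y^2$. It then expands this kernel with the Chebyshev-type coefficients $b_j(\alpha)=(-1)^j\sin((j+1)\pi\alpha)/\sin(\pi\alpha)$ and integrates the moments $\int_0^\infty e^{-s^{1/\alpha}}s^j\,\mathrm{d}s=\alpha\Gamma((j+1)\alpha)$. Recognizing $c_k(\alpha)$ requires the reflection formula, plus a continuity remark at the zeros of $1/\Gamma(1-k\alpha)$. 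In the paper, uniformity comes from $D_\alpha(y)\ge d_I(1+y^2)$, and the $\alpha$-derivative falls on the weight $e^{-s^{1/\alpha}}$ and on $\cos(\pi\alpha)$.

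You instead work on a fixed Hankel contour. There the kernel $1/(w^\alpha+r)$ has a one-step geometric expansion. Since Hankel's formula for $1/\Gamma$ is valid for every complex argument, it produces $c_k(\alpha)$ directly: you need no reflection formula and no special care at the exceptional orders. The cost is complex-variable bookkeeping, and you handle each item correctly:
\begin{itemize}
\item The representation is valid on your contour. For $0<\alpha<1$, $w^\alpha+r$ has no zeros on the principal sheet, so any Hankel contour with $\pi/2<\theta<\pi$ works, and your radius-one circle is harmless.
\item The sector bound $|w^\alpha+r|\ge c_I(|w|^\alpha+r)$ holds because $|\arg w^\alpha|\le\alpha_1\theta<\pi$.
\item The complex logarithms in the $\alpha$-derivative are controlled by the same sector bound.
\end{itemize}
Your estimates actually hold for every $r>0$, as do the paper's. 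The paper's real-variable version has the advantage of meshing with the complete-monotonicity framework used elsewhere, including the quadrature in the numerical section, and its integrands are real and explicitly computable.

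One small slip, which your argument never uses: the Pollard formula as you wrote it should read $E_{\alpha,1}(-r)=\int_0^\infty e^{-r^{1/\alpha}s}K_\alpha(s)\,\mathrm{d}s$. The Laplace variable is $r^{1/\alpha}$, not $r$.
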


\begin{proof}
For clarity, write
\[
        F(\alpha,x)=E_{\alpha,1}(-x),\qquad x>0.
\]
Pollard's completely monotone representation, followed by the change of variables
\(s=xr^\alpha\), gives
\begin{equation}\label{eq:pollard-smoothed}
        F(\alpha,x)
        =\frac{\sin(\pi\alpha)}{\pi\alpha x}
        \int_0^\infty
        \frac{e^{-s^{1/\alpha}}}
        {1+2(s/x)\cos(\pi\alpha)+(s/x)^2}\dd s;
\end{equation}
see \cite{Pollard1948,GorenfloKilbasMainardiRogosin2020}.

Put \(\theta=\pi\alpha\), \(y=s/x\), and
\[
        D_\alpha(y)=1+2y\cos\theta+y^2.
\]
For \(j\ge0\), define
\[
        b_j(\alpha)=(-1)^j\frac{\sin((j+1)\theta)}{\sin\theta}.
\]
These coefficients satisfy \(b_0=1\), \(b_1=-2\cos\theta\), and
\(b_j+2\cos\theta\,b_{j-1}+b_{j-2}=0\). Hence the following finite identity is exact for every \(y\ge0\):
\begin{equation}\label{eq:rational-finite-expansion}
        \frac{1}{D_\alpha(y)}
        =\sum_{j=0}^{N-1}b_j(\alpha)y^j
        +y^N H_N(\alpha,y),
\end{equation}
where
\begin{equation}\label{eq:H-N-def}
        H_N(\alpha,y)
        =\frac{b_N(\alpha)-b_{N-1}(\alpha)y}{D_\alpha(y)}.
\end{equation}
Because \(I\Subset(0,1)\),
\[
        D_\alpha(y)
        \ge d_I(1+y^2),
        \qquad
        d_I=1-\max_{\alpha\in I}\abs{\cos(\pi\alpha)}>0.
\]
It follows directly from \eqref{eq:H-N-def} that
\begin{equation}\label{eq:H-N-bounds}
        \sup_{\alpha\in I,\,y\ge0}
        \left(
        \abs{H_N(\alpha,y)}
        +\abs{\partial_\alpha H_N(\alpha,y)}
        +\abs{y\partial_yH_N(\alpha,y)}
        \right)<\infty.
\end{equation}

For \(j\ge0\), the substitution \(r=s^{1/\alpha}\) yields
\begin{equation}\label{eq:exponential-moment}
        \int_0^\infty e^{-s^{1/\alpha}}s^j\dd s
        =\alpha\Gamma((j+1)\alpha).
\end{equation}
Inserting \eqref{eq:rational-finite-expansion} into
\eqref{eq:pollard-smoothed}, using \eqref{eq:exponential-moment}, and setting
\(k=j+1\), the coefficient of \(x^{-k}\) is
\[
        \frac{\sin(\pi\alpha)}{\pi}
        b_{k-1}(\alpha)\Gamma(k\alpha)
        =\frac{(-1)^{k+1}}{\Gamma(1-k\alpha)}
        =c_k(\alpha),
\]
where the reflection formula is used, with continuity at the zeros of
\(1/\Gamma(1-k\alpha)\). Thus
\begin{equation}\label{eq:F-expansion-with-rem}
        F(\alpha,x)=\sum_{k=1}^N c_k(\alpha)x^{-k}
        +\mathcal R_N(\alpha,x),
\end{equation}
with the exact remainder
\begin{equation}\label{eq:calR-N}
        \mathcal R_N(\alpha,x)
        =\frac{\sin(\pi\alpha)}{\pi\alpha}
        x^{-N-1}
        \int_0^\infty e^{-s^{1/\alpha}}s^N
        H_N(\alpha,s/x)\dd s.
\end{equation}

The bounds \eqref{eq:H-N-bounds} and dominated differentiation imply
\begin{equation}\label{eq:calR-derivative-bounds}
        \abs{\mathcal R_N(\alpha,x)}
        +\abs{\partial_\alpha\mathcal R_N(\alpha,x)}
        +\abs{x\partial_x\mathcal R_N(\alpha,x)}
        \le C_{I,N}x^{-N-1},
        \qquad \alpha\in I,
\end{equation}
for every \(x>0\). Indeed,
\[
        \partial_\alpha e^{-s^{1/\alpha}}
        =\alpha^{-2}s^{1/\alpha}(\log s)e^{-s^{1/\alpha}}.
\]
For \(0<s\le1\), the resulting integrands are bounded by a constant multiple of
\(s^N(1+s^{1/\alpha_1}\abs{\log s})\). For \(s\ge1\), they are bounded by a constant multiple of
\[
        s^N\bigl(1+s^{1/\alpha_0}\log s\bigr)e^{-s^{1/\alpha_1}}.
\]
Both majorants are integrable. This proves uniform dominated differentiation in \(\alpha\); the \(x\)-derivative is controlled by the last term in
\eqref{eq:H-N-bounds}.

Finally set \(x=\lambda t^\alpha\) and
\(R_N(\alpha,\lambda,t)=\mathcal R_N(\alpha,\lambda t^\alpha)\).
Then \eqref{eq:R-N-bound} follows from \eqref{eq:calR-derivative-bounds}, while the chain rule gives
\[
        \partial_\alpha R_N(\alpha,\lambda,t)
        =\partial_\alpha\mathcal R_N(\alpha,x)
        +x\log t\,\partial_x\mathcal R_N(\alpha,x).
\]
For \(t>1\), this proves \eqref{eq:dR-N-bound}. Differentiating the finite sum in
\eqref{eq:F-expansion-with-rem} gives \eqref{eq:dML-expansion-N} and completes the proof.
\end{proof}

For \(N=1\), Theorem \ref{thm:ML-expansion} gives the explicit leading derivative
\begin{equation}\label{eq:first-derivative-leading}
        \partial_\alpha E_{\alpha,1}(-\lambda t^\alpha)
        =\frac{t^{-\alpha}}{\lambda\Gamma(1-\alpha)}
          \bigl(\psi(1-\alpha)-\log t\bigr)
          +O\left((1+\log t)\lambda^{-2}t^{-2\alpha}\right),
\end{equation}
where \(\psi=\Gamma'/\Gamma\). Since \(1/\Gamma(1-\alpha)>0\) on \((0,1)\), this immediately implies one-mode eventual monotonicity.

\begin{corollary}[One-mode monotonicity]\label{cor:one-mode}
Let \(I\Subset(0,1)\) and \(\lambda_0>0\). There exists \(T_*=T_*(I,\lambda_0)>1\) such that, for every \(\lambda\ge\lambda_0\) and \(t\ge T_*\),
\begin{equation}\label{eq:one-mode-negative}
        \partial_\alpha E_{\alpha,1}(-\lambda t^\alpha)<0,
        \qquad \alpha\in I.
\end{equation}
Thus \(\alpha\mapsto E_{\alpha,1}(-\lambda t^\alpha)\) is strictly decreasing on \(I\).
\end{corollary}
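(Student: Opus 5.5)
The plan is to apply Theorem \ref{thm:ML-expansion} with \(N=1\), i.e.\ formula \eqref{eq:first-derivative-leading}, and show that the leading term is strictly negative and dominates the remainder uniformly in \(\lambda\ge\lambda_0\) and \(\alpha\in I\). With \(N=1\), \(c_1(\alpha)=1/\Gamma(1-\alpha)\) and \(c_1'(\alpha)=\psi(1-\alpha)/\Gamma(1-\alpha)\). We write
\[
        \partial_\alpha E_{\alpha,1}(-\lambda t^\alpha)
        =\frac{t^{-\alpha}}{\lambda}\Bigl[\frac{\psi(1-\alpha)-\log t}{\Gamma(1-\alpha)}
        +\lambda t^{\alpha}\,\partial_\alpha R_1(\alpha,\lambda,t)\Bigr],
\]
and we aim to show that the bracket is negative for all large \(t\).

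\textbf{Leading term.} On the compact set \(I=[\alpha_0,\alpha_1]\), the function \(1/\Gamma(1-\alpha)\) is continuous and positive, so it is bounded below by some \(g_I>0\) and above by some \(G_I\). Likewise \(\psi(1-\alpha)\) is bounded above by some \(P_I\), since \(1-\alpha\in[1-\alpha_1,1-\alpha_0]\subset(0,1)\). Hence, for \(\log t\ge P_I+1\), the first term in the bracket is at most \(-g_I(\log t-P_I)\). This bound is negative and grows linearly in \(\log t\).

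\textbf{Remainder.} By \eqref{eq:dR-N-bound} with \(N=1\), for \(t\ge T(I,\lambda_0,1)\),
\[
        \lambda t^\alpha\abs{\partial_\alpha R_1}\le C(1+\log t)\lambda^{-1}t^{-\alpha}\le C\lambda_0^{-1}(1+\log t)t^{-\alpha_0}.
\]
This uses \(\lambda\ge\lambda_0\), \(\alpha\ge\alpha_0\) and \(t>1\). The right-hand side is independent of \(\lambda\) and \(\alpha\), and it tends to \(0\) as \(t\to\infty\).

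\textbf{Choice of \(T_*\).} We pick \(T_*\ge\max\{T(I,\lambda_0,1),e^{P_I+2}\}\) so large that \(C\lambda_0^{-1}(1+\log t)t^{-\alpha_0}<g_I\) for all \(t\ge T_*\). Such a choice exists because the function \((1+\log t)t^{-\alpha_0}\) is eventually decreasing to zero. For \(t\ge T_*\) we then have \(\log t-P_I\ge 2\), so the bracket is at most \(-2g_I+g_I<0\). Multiplying by \(t^{-\alpha}/\lambda>0\) gives \eqref{eq:one-mode-negative}.

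\textbf{Monotonicity.} Strict decrease on \(I\) follows from the mean value theorem, since \(\alpha\mapsto E_{\alpha,1}(-\lambda t^\alpha)\) is differentiable on \(I\) by the theorem.

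The only delicate point is uniformity: the remainder must be small relative to the leading term uniformly in \(\lambda\). This works because after factoring out \(t^{-\alpha}/\lambda\) the relative remainder carries an extra factor \(\lambda^{-1}t^{-\alpha}\le\lambda_0^{-1}t^{-\alpha_0}\). That factor beats the \(\log t\) growth, while the leading term itself grows like \(\log t\).
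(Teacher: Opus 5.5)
Your proposal is correct and follows the paper's route: take $N=1$ in Theorem \ref{thm:ML-expansion}, use positivity of $1/\Gamma(1-\alpha)$ on $I$ so that the $(\psi(1-\alpha)-\log t)$ leading term is eventually negative, and absorb the remainder. The paper treats this as immediate from \eqref{eq:first-derivative-leading}; your explicit choice of $T_*$ supplies the uniformity in $\lambda\ge\lambda_0$ and $\alpha\in I$ that the paper leaves implicit.
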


\section{One-measurement identification and cancellation}\label{sec:one-measurement}

We now apply the expansion to \(M_\alpha\). Under Assumption \ref{ass:moment-finite}, Theorem \ref{thm:ML-expansion} can be summed term by term.

\begin{theorem}[Observation asymptotics]\label{thm:obs-expansion}
Let Assumptions \ref{ass:A} and \ref{ass:moment-finite} hold. Let \(I=[\alpha_0,\alpha_1]\Subset(0,1)\) and \(N\in\N\). Then, uniformly for \(\alpha\in I\),
\begin{equation}\label{eq:M-expansion-N}
        M_\alpha(t)=\sum_{k=1}^{N}c_k(\alpha)S_kt^{-k\alpha}
        +O(t^{-(N+1)\alpha}),
        \qquad t\to\infty,
\end{equation}
and
\begin{equation}\label{eq:dM-expansion-N}
        \partial_\alpha M_\alpha(t)
        =\sum_{k=1}^{N}S_kt^{-k\alpha}
        \bigl(c_k'(\alpha)-kc_k(\alpha)\log t\bigr)
        +O((1+\log t)t^{-(N+1)\alpha}).
\end{equation}
\end{theorem}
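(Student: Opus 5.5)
The plan is to sum the single-mode expansion of Theorem \ref{thm:ML-expansion} over the spectrum, with $\lambda_0=\lambda_1$, so that the constants $C$ and $T$ are uniform in $\alpha\in I$ and in every eigenvalue $\lambda_n\ge\lambda_1$. For $t\ge T$ and each $n$, Theorem \ref{thm:ML-expansion} gives
\[
        E_{\alpha,1}(-\lambda_nt^\alpha)=\sum_{k=1}^Nc_k(\alpha)\lambda_n^{-k}t^{-k\alpha}+R_N(\alpha,\lambda_n,t).
\]
We multiply this by $a_n$ and sum over $n$. Each finite sum $\sum_n a_n\lambda_n^{-k}$ converges absolutely to $S_k$, since $\abs{a_n}\lambda_n^{-k}\le\lambda_1^{1-k}\abs{a_n}/\lambda_n$. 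The remainders obey
\[
        \sum_n\abs{a_n}\abs{R_N(\alpha,\lambda_n,t)}\le C t^{-(N+1)\alpha}\sum_n\abs{a_n}\lambda_n^{-(N+1)}\le C\lambda_1^{-N}\Bigl(\sum_n\frac{\abs{a_n}}{\lambda_n}\Bigr)t^{-(N+1)\alpha}.
\]
Together these give \eqref{eq:M-expansion-N}, with a constant uniform in $\alpha\in I$. The rearrangement is legitimate because every series involved converges absolutely. In the finite-dimensional case the sums are finite and nothing more needs checking.

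For \eqref{eq:dM-expansion-N} the work is to justify term-by-term differentiation in $\alpha$ of $M_\alpha(t)=\sum_na_nE_{\alpha,1}(-\lambda_nt^\alpha)$ at a fixed $t\ge T$. The standard route is to show that the differentiated series converges uniformly in $\alpha\in I$ and then apply the theorem on differentiating series of $C^1$ functions.

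By \eqref{eq:dML-expansion-N} and \eqref{eq:dR-N-bound}, for $t\ge T$ and $\alpha\in I$ the single-mode derivative satisfies
\[
        \abs{\partial_\alpha E_{\alpha,1}(-\lambda_nt^\alpha)}\le C'(1+\log t)\lambda_n^{-1}t^{-\alpha}.
\]
Here we use $\lambda_n^{-k}\le\lambda_1^{1-k}\lambda_n^{-1}$, the boundedness of $c_k$ and $c_k'$ on $I$ (both are smooth on $(0,1)$), and $t^{-k\alpha}\le t^{-\alpha}$ for $t>1$. The resulting majorant $C'(1+\log t)\abs{a_n}/\lambda_n$ is summable by Assumption \ref{ass:moment-finite}, so the Weierstrass M-test gives uniform convergence of the derivative series on $I$.

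Continuity of each term in $\alpha$ also has to be confirmed. The map $\alpha\mapsto E_{\alpha,1}(-x)$ is $C^1$ for fixed $x>0$, which follows from the integral representation \eqref{eq:pollard-smoothed} by the same dominated-differentiation argument used for \eqref{eq:calR-derivative-bounds}. Hence $\partial_\alpha M_\alpha(t)=\sum_na_n\partial_\alpha E_{\alpha,1}(-\lambda_nt^\alpha)$.

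Inserting \eqref{eq:dML-expansion-N} term by term then produces the main sum, whose coefficients are again $S_k$. The remainder is bounded by
\[
        \sum_n\abs{a_n}C(1+\log t)\lambda_n^{-(N+1)}t^{-(N+1)\alpha}=O\bigl((1+\log t)t^{-(N+1)\alpha}\bigr),
\]
which is uniform in $\alpha\in I$.

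The main obstacle is this differentiation-under-summation step. It requires a summable majorant that is uniform in $\alpha$, and the point is that the uniform constants of Theorem \ref{thm:ML-expansion} together with the weighted summability condition supply exactly such a majorant for $t\ge T$. The statement concerns $t\to\infty$, so restricting to $t\ge T$ costs nothing.
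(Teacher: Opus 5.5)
Your proposal is correct and follows essentially the same route as the paper: you apply Theorem~\ref{thm:ML-expansion} with $\lambda_0=\lambda_1$, sum the leading terms and remainders using $\sum_n\abs{a_n}\lambda_n^{-k}\le\lambda_1^{1-k}\sum_n\abs{a_n}/\lambda_n$, and justify termwise $\alpha$-differentiation through a uniform summable majorant and the Weierstrass theorem. Your explicit check that each term is $C^1$ in $\alpha$ is a detail the paper leaves implicit, but it does not change the argument.
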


\begin{proof}
Apply Theorem \ref{thm:ML-expansion} with \(\lambda_0=\lambda_1\). Because \eqref{eq:first-absolute-moment} holds and \(\lambda_n\ge\lambda_1\), the sums
\[
        \sum_{n=1}^\infty\frac{\abs{a_n}}{\lambda_n^k},
        \qquad k\ge1,
\]
are finite. Hence the finite asymptotic terms and the remainders are summable. Summing the leading terms gives the moments \(S_k\), and the uniform remainder bounds follow from
\[
        \sum_{n=1}^\infty\frac{\abs{a_n}}{\lambda_n^{N+1}}
        \le \lambda_1^{-N}
        \sum_{n=1}^\infty\frac{\abs{a_n}}{\lambda_n}.
\]
Moreover, \eqref{eq:dML-expansion-N} and \eqref{eq:dR-N-bound} imply absolute uniform convergence of the differentiated series for \(\alpha\in I\) and \(t\ge T\), after multiplication by \(a_n\) and summation. Termwise differentiation is therefore justified by the Weierstrass theorem, and \eqref{eq:dM-expansion-N} follows.
\end{proof}

By Lemma \ref{lem:moment-completeness}, every nontrivial grouped observation has a finite cancellation index. The next theorem describes eventual monotonicity once this first nonzero moment is identified; it includes the usual first-moment case and all higher cancellation cases.

\begin{theorem}[Eventual monotonicity from the first non-zero moment]\label{thm:first-nonzero}
Let Assumptions \ref{ass:A} and \ref{ass:moment-finite} hold, and fix \(I=[\alpha_0,\alpha_1]\Subset(0,1)\). Suppose that for some \(m\in\N\),
\begin{equation}\label{eq:first-nonzero-moment}
        S_1=\cdots=S_{m-1}=0,
        \qquad S_m\ne0.
\end{equation}
Assume also that
\begin{equation}\label{eq:no-gamma-zero}
        c_m(\alpha)=\frac{(-1)^{m+1}}{\Gamma(1-m\alpha)}\ne0,
        \qquad \alpha\in I.
\end{equation}
Equivalently, if \(m\ge2\), the interval \(I\) does not intersect
\begin{equation}\label{eq:excluded-points}
        \left\{\frac{1}{m},\frac{2}{m},\ldots,\frac{m-1}{m}\right\}.
\end{equation}
Then there exists \(T_I>1\) such that, for every \(t_0\ge T_I\), the map
\[
        I\ni\alpha\longmapsto M_\alpha(t_0)
\]
is strictly monotone. More precisely, its monotonicity direction is determined by the constant sign of \(-S_mc_m(\alpha)\) on \(I\). Consequently the equation
\begin{equation}\label{eq:one-measurement-equation}
        M_\alpha(t_0)=d_0
\end{equation}
has at most one solution \(\alpha\in I\).
\end{theorem}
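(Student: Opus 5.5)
The plan is to apply Theorem \ref{thm:obs-expansion} with $N=m$ and exploit the vanishing of $S_1,\dots,S_{m-1}$. Then the derivative expansion \eqref{eq:dM-expansion-N} collapses to a single leading term, uniformly for $\alpha\in I$:
\[
        \partial_\alpha M_\alpha(t)
        =S_mt^{-m\alpha}\bigl(c_m'(\alpha)-mc_m(\alpha)\log t\bigr)
        +O\bigl((1+\log t)t^{-(m+1)\alpha}\bigr),
        \qquad t\to\infty .
\]
Factoring out the dominant part, I will write
\[
        \partial_\alpha M_\alpha(t)
        =-mS_mc_m(\alpha)\,t^{-m\alpha}\log t\,\bigl(1+\varepsilon(\alpha,t)\bigr),
\]
where $\varepsilon$ collects both the $c_m'$ contribution and the remainder.

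The key step is a uniform lower bound on the leading coefficient. Since $1/\Gamma$ is entire, $c_m$ is continuous on the compact interval $I$, and by \eqref{eq:no-gamma-zero} it does not vanish there. Hence $|c_m|\ge\kappa_I>0$ on $I$, and $c_m$ has a constant sign on $I$. The equivalence with the exclusion set \eqref{eq:excluded-points} follows because the zeros of $1/\Gamma(1-m\alpha)$ occur exactly when $1-m\alpha\in\{0,-1,-2,\dots\}$. For $\alpha\in(0,1)$ this means $\alpha=j/m$ with $1\le j\le m-1$. For $m=1$ there are no zeros, since $1-\alpha>0$.

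Continuity of $c_m'$ on $I$ gives $|c_m'|\le K_I$. Consequently
\[
        \sup_{\alpha\in I}|\varepsilon(\alpha,t)|
        \le \frac{K_I}{m\kappa_I\log t}
        +\frac{C(1+\log t)\,t^{-\alpha_0}}{m|S_m|\kappa_I\log t}.
\]
Here I use $t^{-(m+1)\alpha}/t^{-m\alpha}=t^{-\alpha}\le t^{-\alpha_0}$ for $t>1$. The right-hand side tends to $0$ as $t\to\infty$. I choose $T_I>1$, also at least the threshold $T$ from Theorem \ref{thm:ML-expansion} so that termwise differentiation is valid, such that $\sup_I|\varepsilon|<1/2$ for $t\ge T_I$.

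For every $t_0\ge T_I$ and every $\alpha\in I$, the sign of $\partial_\alpha M_\alpha(t_0)$ then equals the constant sign of $-S_mc_m(\alpha)$, and this derivative never vanishes. By the mean value theorem, $\alpha\mapsto M_\alpha(t_0)$ is strictly monotone on $I$. Injectivity follows, so \eqref{eq:one-measurement-equation} has at most one solution in $I$.

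The main obstacle is the uniformity of the $O$-term in \eqref{eq:dM-expansion-N} over $\alpha\in I$. This uniformity is already provided by Theorem \ref{thm:obs-expansion}, whose proof sums the $\lambda$-uniform bounds of Theorem \ref{thm:ML-expansion} against $\sum|a_n|\lambda_n^{-m-1}<\infty$. The only remaining care is that the $\log t$ factor in the remainder is beaten by the extra decay $t^{-\alpha_0}$, and that the $c_m'$ term is beaten by $\log t$. Both are handled in the bound on $\varepsilon$ above.
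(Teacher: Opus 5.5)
Your proposal is correct and follows the paper's proof essentially step for step: take $N=m$ in Theorem~\ref{thm:obs-expansion}, use $S_1=\cdots=S_{m-1}=0$, and let the uniformly dominant term $-mS_mc_m(\alpha)t^{-m\alpha}\log t$ fix the constant sign of $\partial_\alpha M_\alpha(t)$ on $I$. Your explicit bound on $\varepsilon(\alpha,t)$ and your check of the excluded set $\{1/m,\dots,(m-1)/m\}$ spell out details the paper states only briefly; the paper later quantifies the same domination step in Proposition~\ref{prop:quantitative-time}.
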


\begin{proof}
Take \(N=m\) in Theorem \ref{thm:obs-expansion}. Since the first \(m-1\) moments vanish,
\begin{equation}\label{eq:M-first-nonzero}
        M_\alpha(t)=S_mc_m(\alpha)t^{-m\alpha}+O(t^{-(m+1)\alpha})
\end{equation}
and
\begin{equation}\label{eq:dM-first-nonzero}
        \partial_\alpha M_\alpha(t)
        =S_mt^{-m\alpha}\bigl(c_m'(\alpha)-mc_m(\alpha)\log t\bigr)
        +O((1+\log t)t^{-(m+1)\alpha}).
\end{equation}
By \eqref{eq:no-gamma-zero}, the continuous function \(c_m\) has a constant non-zero sign on the interval \(I\), and
\[
        c_*:=\min_{\alpha\in I}\abs{c_m(\alpha)}>0.
\]
The term \(-mS_mc_m(\alpha)t^{-m\alpha}\log t\) dominates \eqref{eq:dM-first-nonzero} uniformly as \(t\to\infty\). Therefore \(\partial_\alpha M_\alpha(t)\) has the constant sign \(-\operatorname{sign}(S_mc_m)\) on \(I\) for every sufficiently large \(t\). Strict monotonicity and uniqueness follow.
\end{proof}

\begin{corollary}[The first resolvent moment]\label{cor:S1}
If \(S_1\ne0\), then the conclusion of Theorem \ref{thm:first-nonzero} holds with \(m=1\), and no exceptional values of \(\alpha\) have to be excluded. The map \(\alpha\mapsto M_\alpha(t_0)\) is strictly decreasing for large \(t_0\) if \(S_1>0\) and strictly increasing if \(S_1<0\).
\end{corollary}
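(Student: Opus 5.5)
This is a direct specialization of Theorem \ref{thm:first-nonzero} to \(m=1\). The plan is to check that the hypotheses hold automatically, and then to read off the sign of the derivative from the case \(m=1\) of \eqref{eq:dM-first-nonzero}.

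First, condition \eqref{eq:first-nonzero-moment} with \(m=1\) contains no vanishing requirements; it only asks that \(S_1\ne0\), which is assumed. Second, condition \eqref{eq:no-gamma-zero} reads \(c_1(\alpha)=1/\Gamma(1-\alpha)\ne0\). For \(\alpha\in(0,1)\) we have \(1-\alpha\in(0,1)\), where \(\Gamma\) is finite and positive. Hence \(c_1>0\) on all of \((0,1)\), and in particular on any \(I\Subset(0,1)\). The excluded set \eqref{eq:excluded-points} is also empty for \(m=1\). So no order needs to be removed, and Theorem \ref{thm:first-nonzero} applies on every compact \(I\Subset(0,1)\). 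It yields \(T_I>1\) beyond which \(\alpha\mapsto M_\alpha(t_0)\) is strictly monotone, and equation \eqref{eq:one-measurement-equation} has at most one solution in \(I\).

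For the direction, use \eqref{eq:dM-first-nonzero} with \(m=1\), or equivalently sum \eqref{eq:first-derivative-leading} against the coefficients \(a_n\) via Theorem \ref{thm:obs-expansion}:
\[
\partial_\alpha M_\alpha(t)=\frac{S_1t^{-\alpha}}{\Gamma(1-\alpha)}\bigl(\psi(1-\alpha)-\log t\bigr)+O\bigl((1+\log t)t^{-2\alpha}\bigr).
\]
Since \(\psi(1-\alpha)\) is bounded on \(I\), the leading term is \(-S_1\Gamma(1-\alpha)^{-1}t^{-\alpha}\log t\,(1+o(1))\) uniformly in \(\alpha\in I\). The remainder is smaller by a factor \(O(t^{-\alpha_0})\). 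Because \(1/\Gamma(1-\alpha)\) is bounded below by a positive constant on \(I\), for large \(t\) the sign of \(\partial_\alpha M_\alpha(t)\) equals \(-\operatorname{sign}S_1\). The map is therefore strictly decreasing when \(S_1>0\) and strictly increasing when \(S_1<0\).

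The only step needing care is this uniformity of dominance in \(\alpha\in I\). It follows from the compactness of \(I\) together with the uniform remainder bounds in Theorem \ref{thm:obs-expansion}, exactly as in the proof of Theorem \ref{thm:first-nonzero}.
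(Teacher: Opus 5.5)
Your proposal is correct and follows the paper's route: the corollary is Theorem \ref{thm:first-nonzero} with \(m=1\), where \(c_1(\alpha)=1/\Gamma(1-\alpha)>0\) on all of \((0,1)\). This leaves no exceptional orders to exclude, and the monotonicity direction \(-\operatorname{sign}(S_1c_1)=-\operatorname{sign}S_1\) is read off from \eqref{eq:dM-first-nonzero}.
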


\begin{corollary}[Bounded observations in intrinsic form]\label{cor:bounded-intrinsic}
Let \(M_\alpha(t)=\inner{u_\alpha(t)}{h}\), where \(\varphi,h\in H\). Suppose that for some \(m\in\N\),
\begin{equation}\label{eq:intrinsic-first-nonzero}
        \inner{\cA^{-k}\varphi}{h}=0
        \quad (1\le k\le m-1),
        \qquad
        \inner{\cA^{-m}\varphi}{h}\ne0,
\end{equation}
where the vanishing condition is void when \(m=1\). Assume also that \(I\) satisfies \eqref{eq:no-gamma-zero}. Then one sufficiently late value of \(\inner{u_\alpha(t)}{h}\) determines at most one \(\alpha\in I\).
\end{corollary}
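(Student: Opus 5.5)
The plan is to reduce Corollary~\ref{cor:bounded-intrinsic} directly to Theorem~\ref{thm:first-nonzero}, translating its hypotheses from the intrinsic form into the spectral form. First I will check the standing assumptions. Put \(a_n=\varphi_nh_n\), with \(\varphi_n=\inner{\varphi}{v_n}\) and \(h_n=\inner{h}{v_n}\). By Proposition~\ref{prop:forward} and continuity of the inner product,
\[
M_\alpha(t)=\inner{u_\alpha(t)}{h}=\sum_n\varphi_nh_nE_{\alpha,1}(-\lambda_nt^\alpha).
\]
This is exactly the observation \eqref{eq:M-def}. The Cauchy estimate \eqref{eq:bounded-summability} gives \(\sum_n\abs{a_n}/\lambda_n\le\norm{\varphi}\norm{h}/\lambda_1<\infty\), so Assumption~\ref{ass:moment-finite} holds. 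Assumption~\ref{ass:A} is the standing hypothesis.

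Next I identify the moments. The identity \eqref{eq:resolvent-moment} says \(S_k=\inner{\cA^{-k}\varphi}{h}\). To justify it, note that \(\cA^{-k}\) is bounded, since \(\lambda_n\ge\lambda_1>0\). Its expansion \(\cA^{-k}\varphi=\sum_n\lambda_n^{-k}\varphi_nv_n\) converges in \(H\), and Parseval's identity then gives the formula. Under this identification, condition \eqref{eq:intrinsic-first-nonzero} is exactly the cancellation condition \eqref{eq:first-nonzero-moment}: \(S_1=\cdots=S_{m-1}=0\) and \(S_m\ne0\). When \(m=1\) the vanishing part is empty, and Corollary~\ref{cor:S1} covers that case.

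With \eqref{eq:no-gamma-zero} assumed on \(I\), Theorem~\ref{thm:first-nonzero} applies. It gives \(T_I>1\) such that, for every \(t_0\ge T_I\), the map \(\alpha\mapsto M_\alpha(t_0)=\inner{u_\alpha(t_0)}{h}\) is strictly monotone on \(I\). Hence the equation \(\inner{u_\alpha(t_0)}{h}=d_0\) has at most one solution \(\alpha\in I\), which is the claim with ``sufficiently late'' meaning \(t_0\ge T_I\).

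The only point needing care is whether grouping by eigenspaces interferes. Theorem~\ref{thm:first-nonzero} is stated in terms of the moments \(S_k\) themselves, and these are invariant under the regrouping \eqref{eq:grouped-observation-moments}. So no grouping step is needed: the hypothesis \(S_m\ne0\) is assumed directly. I expect no real obstacle here, since the argument is a direct specialization of the earlier theorem.
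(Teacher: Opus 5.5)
Your proposal is correct and follows essentially the same route as the paper. The paper's proof likewise obtains the summability condition from the Cauchy estimate, identifies $S_k=\inner{\cA^{-k}\varphi}{h}$ via the resolvent-moment identity, and then invokes Theorem~\ref{thm:first-nonzero}; your extra remarks (the Parseval justification of that identity and the observation that no eigenspace grouping is needed) are sound but not required.
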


\begin{proof}
By \eqref{eq:bounded-summability} the general summability condition holds, and by \eqref{eq:resolvent-moment} the spectral moments are the resolvent moments. The result follows from Theorem \ref{thm:first-nonzero}.
\end{proof}

\begin{remark}[Near cancellation and pre-asymptotic crossover]\label{rem:near-cancellation}
The cancellation index is an exact structural quantity and may change under small perturbations of the forward configuration. For example, suppose at a fixed admissible order that \(S_1=\varepsilon\ne0\) is very small, \(S_2=\cdots=S_{m-1}=0\), and \(S_m\ne0\) for some \(m\ge2\). Away from zeros of \(c_1\) and \(c_m\), the first and \(m\)-th asymptotic terms become comparable on the scale
\[
        t^{(m-1)\alpha}
        \asymp
        \frac{\abs{c_m(\alpha)S_m}}
             {\abs{c_1(\alpha)\varepsilon}}.
\]
Before this crossover, finite-time data may resemble an \(m\)-th order cancellation even though the true eventual index is \(1\). Exact cancellations should therefore be verified from the known model, and near-cancellation should be accounted for when choosing observation times or interpreting an empirical decay slope.
\end{remark}

The proof also gives a quantitative sufficient condition for the time threshold. We state it in a form that separates the asymptotic constant from the data.

\begin{proposition}[A sufficient late-time condition]\label{prop:quantitative-time}
Let the assumptions of Theorem \ref{thm:first-nonzero} hold. Let \(C_I\) be a constant such that the remainder in \eqref{eq:dM-first-nonzero} is bounded by
\[
        C_I(1+\log t)t^{-(m+1)\alpha}
        \sum_{n=1}^\infty\frac{\abs{a_n}}{\lambda_n^{m+1}}
\]
for \(\alpha\in I\) and \(t\ge T_0\). Put
\[
        A_{m+1}=\sum_{n=1}^\infty\frac{\abs{a_n}}{\lambda_n^{m+1}},
        \quad
        c_* =\min_{\alpha\in I}\abs{c_m(\alpha)},
        \quad
        c^* =\sup_{\alpha\in I}\abs{c_m'(\alpha)}.
\]
Then monotonicity on \(I\) holds for every \(t\ge T_0\) satisfying
\begin{equation}\label{eq:quantitative-time}
        \abs{S_m}\bigl(mc_*\log t-c^*\bigr)
        > C_IA_{m+1}(1+\log t)t^{-\alpha_0}.
\end{equation}
\end{proposition}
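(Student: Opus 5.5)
The plan is to return to the two-term decomposition \eqref{eq:dM-first-nonzero} from the proof of Theorem \ref{thm:first-nonzero}, now with explicit constants, and show that the leading term strictly dominates the remainder in absolute value whenever \eqref{eq:quantitative-time} holds. Fix \(\alpha\in I\) and \(t\ge T_0\). We may take \(T_0>1\), since \(T\) in Theorem \ref{thm:ML-expansion} exceeds \(1\), so that \(\log t>0\). By hypothesis, and using \(S_1=\cdots=S_{m-1}=0\), we can write
\[
        \partial_\alpha M_\alpha(t)
        =S_mt^{-m\alpha}\bigl(c_m'(\alpha)-mc_m(\alpha)\log t\bigr)+\mathcal E(\alpha,t),
\]
with \(\abs{\mathcal E(\alpha,t)}\le C_IA_{m+1}(1+\log t)t^{-(m+1)\alpha}\).

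First, bound the leading term from below. By \eqref{eq:no-gamma-zero}, the factor \(c_m\) has a constant sign on \(I\), and \(\abs{c_m(\alpha)}\ge c_*\). Hence
\[
        \abs{c_m'(\alpha)-mc_m(\alpha)\log t}\ge m\abs{c_m(\alpha)}\log t-\abs{c_m'(\alpha)}\ge mc_*\log t-c^*.
\]
Whenever the right-hand side is positive, the bracket has the sign of \(-c_m(\alpha)\). Condition \eqref{eq:quantitative-time} forces \(mc_*\log t-c^*>0\), because its right-hand side is nonnegative. So the leading term has the sign of \(-S_mc_m(\alpha)\), and its modulus is at least \(\abs{S_m}t^{-m\alpha}(mc_*\log t-c^*)\).

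Second, compare with the remainder. Factoring out \(t^{-m\alpha}\) gives \(\abs{\mathcal E}\le t^{-m\alpha}C_IA_{m+1}(1+\log t)t^{-\alpha}\). For \(t>1\) and \(\alpha\ge\alpha_0\) we have \(t^{-\alpha}\le t^{-\alpha_0}\). Therefore \eqref{eq:quantitative-time}, multiplied by \(t^{-m\alpha}>0\), shows that the leading term strictly exceeds \(\abs{\mathcal E}\) in modulus. Consequently \(\partial_\alpha M_\alpha(t)\) is nonzero and has sign \(-\operatorname{sign}(S_mc_m)\), which is constant on \(I\).

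Finally, the map \(\alpha\mapsto M_\alpha(t)\) is \(C^1\) on \(I\), since termwise differentiation was justified in Theorem \ref{thm:obs-expansion}. A derivative of constant nonzero sign then gives strict monotonicity. The only delicate point is bookkeeping. The lower bound must use \(\min\abs{c_m}\) and \(\sup\abs{c_m'}\) uniformly in \(\alpha\). The worst-case decay exponent is \(\alpha_0\), because \(t^{-\alpha}\) is largest at the smallest order when \(t>1\). There is no substantive obstacle beyond checking that \(T_0>1\).
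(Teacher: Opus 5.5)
Your proposal is correct and follows the paper's proof essentially step for step. You bound the leading term below by \(\abs{S_m}t^{-m\alpha}(mc_*\log t-c^*)\), divide by \(t^{-m\alpha}\), and use \(t^{-\alpha}\le t^{-\alpha_0}\), so that \eqref{eq:quantitative-time} makes the leading term dominate the remainder and fixes the sign of \(\partial_\alpha M_\alpha(t)\) as \(-\operatorname{sign}(S_mc_m)\) on \(I\).

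One small correction. You cannot take \(T_0>1\) from Theorem \ref{thm:ML-expansion}, since \(T_0\) is whatever threshold the hypothesis supplies. You do not need to, however: the assumed remainder bound forces the right-hand side of \eqref{eq:quantitative-time} to be nonnegative, so the condition itself gives \(mc_*\log t>c^*\ge0\), hence \(t>1\).
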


\begin{proof}
For fixed \(\alpha\in I\), the leading part of \eqref{eq:dM-first-nonzero} has magnitude at least
\[
        \abs{S_m}t^{-m\alpha}(mc_*\log t-c^*)
\]
whenever the factor in parentheses is positive, while the derivative remainder is bounded by
\[
        C_IA_{m+1}(1+\log t)t^{-(m+1)\alpha}.
\]
After division by \(t^{-m\alpha}\), it is enough to require
\[
        \abs{S_m}(mc_*\log t-c^*)
        > C_IA_{m+1}(1+\log t)t^{-\alpha}.
\]
Since \(t^{-\alpha}\le t^{-\alpha_0}\) for \(\alpha\in I\), condition \eqref{eq:quantitative-time} is sufficient. It also holds for all sufficiently large \(t\), because the left-hand side grows like \(\log t\) and the right-hand side tends to zero. Thus the derivative cannot change sign on \(I\).
\end{proof}

The same derivative lower bound gives Lipschitz stability for the one-measurement inverse map.

\begin{corollary}[One-measurement Lipschitz stability]\label{cor:one-measurement-stability}
Under the assumptions of Theorem \ref{thm:first-nonzero}, let \(t_0\ge T_I\) and define
\begin{equation}\label{eq:m-I-t}
        \mu_I(t_0)=\inf_{\alpha\in I}\abs{\partial_\alpha M_\alpha(t_0)}.
\end{equation}
Then \(\mu_I(t_0)>0\), and for all \(\alpha,\widetilde\alpha\in I\),
\begin{equation}\label{eq:one-measurement-lip}
        \abs{\alpha-\widetilde\alpha}
        \le \frac{1}{\mu_I(t_0)}
        \abs{M_\alpha(t_0)-M_{\widetilde\alpha}(t_0)}.
\end{equation}
Moreover, after increasing \(T_I\) if necessary,
\begin{equation}\label{eq:mu-lower-general}
        \mu_I(t_0)
        \ge \frac{m\abs{S_m}}{2}
        \left(\min_{\alpha\in I}\abs{c_m(\alpha)}\right)
        t_0^{-m\alpha_1}\log t_0.
\end{equation}
\end{corollary}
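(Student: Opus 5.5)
The plan has three parts: first show that \(\partial_\alpha M_\alpha(t_0)\) never vanishes on \(I\), then get the Lipschitz bound from the mean value theorem, and finally obtain the quantitative bound \eqref{eq:mu-lower-general} from the leading term of \eqref{eq:dM-first-nonzero}.

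\textbf{Positivity of \(\mu_I(t_0)\).} By Theorem \ref{thm:obs-expansion}, the function \(\alpha\mapsto\partial_\alpha M_\alpha(t_0)\) is continuous on the compact interval \(I\), because the differentiated series converges uniformly there. The proof of Theorem \ref{thm:first-nonzero} shows that, for \(t_0\ge T_I\), this derivative has the constant nonzero sign \(-\operatorname{sign}(S_mc_m)\) on \(I\). A continuous function with no zero on a compact set attains a positive minimum of its absolute value, so \(\mu_I(t_0)>0\).

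\textbf{Lipschitz estimate.} Given \(\alpha,\widetilde\alpha\in I\), the mean value theorem supplies some \(\xi\) between them with
\[
M_\alpha(t_0)-M_{\widetilde\alpha}(t_0)=\partial_\alpha M_\xi(t_0)\,(\alpha-\widetilde\alpha).
\]
Since \(\abs{\partial_\alpha M_\xi(t_0)}\ge\mu_I(t_0)\), dividing through gives \eqref{eq:one-measurement-lip}.

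\textbf{Quantitative lower bound.} Write \(c_*=\min_I\abs{c_m}\) and \(c^*=\sup_I\abs{c_m'}\). For each \(\alpha\in I\), the leading term of \eqref{eq:dM-first-nonzero} has modulus at least
\[
\abs{S_m}\,t_0^{-m\alpha}\bigl(mc_*\log t_0-c^*\bigr).
\]
The remainder is at most \(C(1+\log t_0)\,t_0^{-(m+1)\alpha}\), uniformly in \(\alpha\in I\). Both \(c^*\) and \(C(1+\log t_0)\,t_0^{-\alpha_0}\) are \(o(\log t_0)\). Hence, after enlarging \(T_I\), they can be absorbed into half of the main term, which gives for every \(\alpha\in I\)
\[
\abs{\partial_\alpha M_\alpha(t_0)}\ge\tfrac{m}{2}\,\abs{S_m}\,c_*\,t_0^{-m\alpha}\log t_0 .
\]
For \(t_0>1\) and \(\alpha\le\alpha_1\) we have \(t_0^{-m\alpha}\ge t_0^{-m\alpha_1}\). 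Taking the infimum over \(\alpha\in I\) then yields \eqref{eq:mu-lower-general}.

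The main obstacle is doing this absorption \emph{uniformly} in \(\alpha\). This requires the remainder constant in \eqref{eq:dM-first-nonzero} to be independent of \(\alpha\in I\), which is exactly what Theorem \ref{thm:obs-expansion} provides. It also requires factoring out \(t_0^{-m\alpha}\) before comparing the terms, so that the relative remainder is \(O((1+\log t_0)t_0^{-\alpha_0})\) uniformly on \(I\).
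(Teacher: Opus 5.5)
Your proposal is correct and follows essentially the same argument as the paper: the mean-value theorem gives the Lipschitz bound, and uniform domination of the logarithmic leading term in \eqref{eq:dM-first-nonzero} over the bounded \(c_m'\)-term and the remainder, followed by \(t_0^{-m\alpha}\ge t_0^{-m\alpha_1}\), gives \eqref{eq:mu-lower-general}. Your continuity-and-compactness argument for \(\mu_I(t_0)>0\), which rests on the uniform convergence of the differentiated series, makes explicit a step the paper leaves implicit.
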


\begin{proof}
The mean-value theorem gives \eqref{eq:one-measurement-lip}. Estimate \eqref{eq:mu-lower-general} follows from \eqref{eq:dM-first-nonzero} because the logarithmic leading term dominates the bounded derivative of \(c_m\) and the remainder for large \(t_0\).
\end{proof}

\begin{remark}[Late-time tradeoff]\label{rem:tradeoff}
The sensitivity quantity \(\mu_I(t_0)\) itself tends to zero as \(t_0\to\infty\). Indeed, for any fixed \(\bar\alpha\in I\),
\[
        0\le \mu_I(t_0)
        \le \abs{\partial_\alpha M_{\bar\alpha}(t_0)}
        =O\bigl((1+\log t_0)t_0^{-m\bar\alpha}\bigr)
        \longrightarrow0.
\]
Thus late observation enforces monotonicity, but excessive delay reduces sensitivity to absolute perturbations in a single measurement. The two-time ratio below removes the leading amplitude decay in the logarithmic ratio variable; its time-uniform interpretation is therefore relative rather than additive.
\end{remark}

\section{Two-time log-ratio identification}\label{sec:ratio}

A single late value is sufficient for uniqueness but can be poorly conditioned in the absolute data variable. Measuring at two late times and taking a logarithmic ratio cancels the leading amplitude and produces a better-conditioned relative data variable.

Fix \(\rho>1\) and define
\begin{equation}\label{eq:R-def}
        R_\alpha(t,\rho)=\frac{M_\alpha(\rho t)}{M_\alpha(t)},
\end{equation}
whenever the denominator is non-zero.

\begin{theorem}[Two-time log-ratio inverse estimate]\label{thm:ratio}
Assume the hypotheses of Theorem \ref{thm:first-nonzero}. Let \(\rho>1\). There exists \(T_{I,\rho}>1\) such that, for all \(t\ge T_{I,\rho}\), \(M_\alpha(t)\) and \(M_\alpha(\rho t)\) are non-zero on \(I\), and
\begin{equation}\label{eq:ratio-asymp}
        R_\alpha(t,\rho)=\rho^{-m\alpha}\bigl(1+O(t^{-\alpha_0})\bigr)
\end{equation}
uniformly for \(\alpha\in I\). Moreover,
\begin{equation}\label{eq:logratio-derivative}
        \partial_\alpha\log\abs{R_\alpha(t,\rho)}
        =-m\log\rho+O((1+\log t)t^{-\alpha_0})
\end{equation}
uniformly for \(\alpha\in I\). Consequently, for every sufficiently large \(t\), the map
\[
        \alpha\longmapsto \log\abs{R_\alpha(t,\rho)}
\]
is strictly decreasing on \(I\), and
\begin{equation}\label{eq:ratio-stability}
        \abs{\alpha-\widetilde\alpha}
        \le \frac{2}{m\log\rho}
        \left\lvert
        \log\abs{R_\alpha(t,\rho)}
        -\log\abs{R_{\widetilde\alpha}(t,\rho)}
        \right\rvert
\end{equation}
for all \(\alpha,\widetilde\alpha\in I\).
\end{theorem}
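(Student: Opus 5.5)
We derive everything from the expansions of Theorem \ref{thm:obs-expansion} with \(N=m\), exactly as in the proof of Theorem \ref{thm:first-nonzero}. Writing \(L(\alpha):=S_mc_m(\alpha)\), we have \(\abs{L(\alpha)}\ge \abs{S_m}c_*>0\) on \(I\) by \eqref{eq:no-gamma-zero}, and \eqref{eq:M-first-nonzero} gives
\[
M_\alpha(t)=L(\alpha)t^{-m\alpha}\bigl(1+\varepsilon_\alpha(t)\bigr),\qquad \abs{\varepsilon_\alpha(t)}\le C t^{-\alpha}\le Ct^{-\alpha_0},
\]
with \(C\) uniform on \(I\). The same bound holds at \(\rho t\), with \(C\) unchanged since \(\rho t\ge t\).

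\textbf{Nonvanishing and ratio.} We choose \(T_{I,\rho}\) so that \(Ct^{-\alpha_0}\le 1/2\) for \(t\ge T_{I,\rho}\). Then \(M_\alpha(t)\) and \(M_\alpha(\rho t)\) both have the sign of \(L(\alpha)\) and are nonzero. Dividing,
\[
R_\alpha(t,\rho)=\rho^{-m\alpha}\,\frac{1+\varepsilon_\alpha(\rho t)}{1+\varepsilon_\alpha(t)},
\]
and the quotient is \(1+O(t^{-\alpha_0})\) uniformly, which is \eqref{eq:ratio-asymp}. Since the two values share a sign, \(R_\alpha>0\) and \(\log\abs{R_\alpha}=\log\abs{M_\alpha(\rho t)}-\log\abs{M_\alpha(t)}\).

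\textbf{Log-derivative.} This is the main step. We compute
\[
\partial_\alpha\log\abs{M_\alpha(s)}=\frac{\partial_\alpha M_\alpha(s)}{M_\alpha(s)},\qquad s\in\{t,\rho t\}.
\]
By \eqref{eq:dM-first-nonzero},
\[
\partial_\alpha M_\alpha(s)=L(\alpha)s^{-m\alpha}\Bigl(\tfrac{c_m'(\alpha)}{c_m(\alpha)}-m\log s\Bigr)+O\bigl((1+\log s)s^{-(m+1)\alpha}\bigr).
\]
Dividing by \(M_\alpha(s)=L(\alpha)s^{-m\alpha}(1+\varepsilon_\alpha(s))\), and using \(1/(1+\varepsilon)=1+O(s^{-\alpha_0})\) together with \(\abs{L}\) bounded below, yields
\[
\partial_\alpha\log\abs{M_\alpha(s)}=\frac{c_m'(\alpha)}{c_m(\alpha)}-m\log s+O\bigl((1+\log s)s^{-\alpha_0}\bigr),
\]
uniformly on \(I\). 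Here \(c_m'/c_m\) is bounded because \(c_m\) is smooth and \(\abs{c_m}\ge c_*\).

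The delicate point is that the leading term \(-m\log s\) is itself multiplied by \(O(s^{-\alpha_0})\) when expanding the quotient. This product is still \(O((1+\log s)s^{-\alpha_0})\), so it is absorbed into the remainder. Subtracting the \(s=t\) identity from the \(s=\rho t\) identity, the terms \(c_m'/c_m\) cancel exactly, and \(-m\log(\rho t)+m\log t=-m\log\rho\). Because \(1+\log(\rho t)\le C_\rho(1+\log t)\), the remainders combine into \(O((1+\log t)t^{-\alpha_0})\), which gives \eqref{eq:logratio-derivative}.

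\textbf{Monotonicity and stability.} We enlarge \(T_{I,\rho}\) so that this remainder is at most \(\tfrac12 m\log\rho\). Then
\[
\partial_\alpha\log\abs{R_\alpha}\le-\tfrac12 m\log\rho<0
\]
on \(I\), so \(\alpha\mapsto\log\abs{R_\alpha(t,\rho)}\) is strictly decreasing. The mean value theorem on the interval \(I\) then gives \eqref{eq:ratio-stability} with constant \(2/(m\log\rho)\).
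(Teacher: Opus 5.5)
Your proposal is correct and follows essentially the same route as the paper. The paper factors \(M_\alpha(t)=S_mc_m(\alpha)t^{-m\alpha}(1+r_\alpha(t))\) with a uniform \(O(t^{-\alpha_0})\) remainder, divides the expansions at \(\rho t\) and \(t\), and derives \(\partial_\alpha\log\abs{M_\alpha(s)}=c_m'(\alpha)/c_m(\alpha)-m\log s+O((1+\log s)s^{-\alpha_0})\); it then subtracts the two versions so that \(c_m'/c_m\) cancels, bounds the remainder by \(\tfrac12 m\log\rho\), and applies the mean-value theorem. Your handling of the cross term \(-m\log s\cdot O(s^{-\alpha_0})\), the common sign of \(M_\alpha(t)\) and \(M_\alpha(\rho t)\), and the bound \(1+\log(\rho t)\le C_\rho(1+\log t)\) just makes explicit details the paper leaves implicit.
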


\begin{proof}
By \eqref{eq:M-first-nonzero},
\begin{equation}\label{eq:M-relative}
        M_\alpha(t)=S_mc_m(\alpha)t^{-m\alpha}\bigl(1+r_\alpha(t)\bigr),
        \qquad r_\alpha(t)=O(t^{-\alpha_0}),
\end{equation}
uniformly for \(\alpha\in I\). The non-vanishing of \(S_mc_m(\alpha)\) and the uniform smallness of \(r_\alpha(t)\) imply that \(M_\alpha(t)\) and \(M_\alpha(\rho t)\) are non-zero for large \(t\). Dividing the expansion at \(\rho t\) by the expansion at \(t\) gives \eqref{eq:ratio-asymp}. Differentiating the logarithm of \eqref{eq:M-relative}, and using \eqref{eq:dM-first-nonzero}, gives
\[
        \partial_\alpha\log\abs{M_\alpha(t)}
        =\frac{c_m'(\alpha)}{c_m(\alpha)}-m\log t
        +O((1+\log t)t^{-\alpha_0}).
\]
Subtracting the same formula with \(t\) replaced by \(\rho t\) proves \eqref{eq:logratio-derivative}. For large \(t\), the error in \eqref{eq:logratio-derivative} is bounded by \((m\log\rho)/2\). The mean-value theorem then yields \eqref{eq:ratio-stability}.
\end{proof}

The leading reconstruction formula is therefore
\begin{equation}\label{eq:log-ratio-estimator}
        \alpha_{\rm LR}
        =-\frac{1}{m\log\rho}\log\left\lvert\frac{d_2}{d_1}\right\rvert,
        \qquad d_1=M_\alpha(t),\quad d_2=M_\alpha(\rho t),
\end{equation}
with a model error of order \(O(t^{-\alpha_0})\) under the assumptions of Theorem \ref{thm:ratio}. The integer \(m\) is the known cancellation index from \eqref{eq:cancellation-index}; two measurements alone do not determine it. When \(S_1\ne0\), one has \(m=1\), and \eqref{eq:log-ratio-estimator} is the secant, in logarithmic time, of the late-time logarithmic-slope reconstruction used in \cite{HatanoNakagawaWangYamamoto2013}.

\begin{proposition}[Relative-noise perturbation]\label{prop:relative-noise}
Let \(d_1,d_2\ne0\), and suppose that the measured values are
\[
        \widetilde d_j=d_j(1+\varepsilon_j),
        \qquad \abs{\varepsilon_j}\le\eta<1,
        \qquad j=1,2.
\]
Let \(\alpha_{\rm LR}\) and \(\widetilde\alpha_{\rm LR}\) be the estimators obtained from \eqref{eq:log-ratio-estimator} using \((d_1,d_2)\) and \((\widetilde d_1,\widetilde d_2)\), respectively. Then
\begin{equation}\label{eq:relative-noise-bound}
        \abs{\widetilde\alpha_{\rm LR}-\alpha_{\rm LR}}
        \le \frac{2[-\log(1-\eta)]}{m\log\rho}.
\end{equation}
In particular, the perturbation is \(O(\eta)\) as \(\eta\to0\), with a constant independent of the late observation time.
\end{proposition}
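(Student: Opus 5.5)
The proof is a direct computation from the explicit estimator \eqref{eq:log-ratio-estimator}; no asymptotics are needed. First I would write the perturbed estimator. Since \(\abs{\varepsilon_j}\le\eta<1\), the factors \(1+\varepsilon_j\) are positive, so \(\widetilde d_j\ne0\) and \(\widetilde\alpha_{\rm LR}\) is well defined. We have
\[
        \log\left\lvert\frac{\widetilde d_2}{\widetilde d_1}\right\rvert
        =\log\left\lvert\frac{d_2}{d_1}\right\rvert
        +\log(1+\varepsilon_2)-\log(1+\varepsilon_1).
\]
Hence
\[
        \widetilde\alpha_{\rm LR}-\alpha_{\rm LR}
        =-\frac{1}{m\log\rho}\bigl(\log(1+\varepsilon_2)-\log(1+\varepsilon_1)\bigr).
\]
Here \(m\ge1\) is the known cancellation index and \(\log\rho>0\), so the prefactor is finite.

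Next comes the one-variable estimate: for \(\abs{\varepsilon}\le\eta<1\),
\[
        \abs{\log(1+\varepsilon)}\le-\log(1-\eta).
\]
For \(\varepsilon\in[-\eta,0]\), \(\abs{\log(1+\varepsilon)}=-\log(1+\varepsilon)\le-\log(1-\eta)\) by monotonicity of the logarithm. For \(\varepsilon\in[0,\eta]\), \(\log(1+\varepsilon)\le\log(1+\eta)\). It remains to check \(\log(1+\eta)\le-\log(1-\eta)\), which is equivalent to \((1+\eta)(1-\eta)=1-\eta^2\le1\).

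Applying the triangle inequality to the two logarithms gives
\[
        \abs{\widetilde\alpha_{\rm LR}-\alpha_{\rm LR}}\le\frac{2[-\log(1-\eta)]}{m\log\rho},
\]
which is \eqref{eq:relative-noise-bound}.

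For the final claim, \(-\log(1-\eta)=\eta+O(\eta^2)\) as \(\eta\to0\), so the perturbation is \(O(\eta)\). The bound involves only \(\eta\), \(m\) and \(\rho\), so it is independent of the observation time \(t\). The only step needing care is the asymmetric logarithm bound, and it reduces to \(1-\eta^2\le1\).
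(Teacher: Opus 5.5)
Your proposal is correct and follows the paper's own argument. You split the perturbed log-ratio as the exact one plus $\log(1+\varepsilon_2)-\log(1+\varepsilon_1)$, apply the triangle inequality, and bound each term by $-\log(1-\eta)$; you also spell out the one-variable bound $\log(1+\eta)\le-\log(1-\eta)$, which the paper leaves implicit.
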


\begin{proof}
Since \(1+\varepsilon_j>0\),
\[
\begin{aligned}
        \abs{\widetilde\alpha_{\rm LR}-\alpha_{\rm LR}}
        &\le \frac{1}{m\log\rho}
        \left(
        \abs{\log(1+\varepsilon_1)}
        +\abs{\log(1+\varepsilon_2)}
        \right)\\
        &\le \frac{2[-\log(1-\eta)]}{m\log\rho}.
\end{aligned}
\]
\end{proof}

\begin{corollary}[Combined model and relative-noise error]\label{cor:combined-error}
Under the hypotheses of Theorem \ref{thm:ratio}, there exist constants \(C>0\) and \(T>1\), depending only on the admissible interval, \(\rho\), and the known forward configuration, such that the following holds. Let \(\alpha\in I\),
\[
        d_1=M_\alpha(t),\qquad d_2=M_\alpha(\rho t),
\]
and let \(\widetilde d_j=d_j(1+\varepsilon_j)\) with \(\abs{\varepsilon_j}\le\eta<1\). For every \(t\ge T\), the estimator formed from the noisy values satisfies
\begin{equation}\label{eq:combined-error}
        \abs{\widetilde\alpha_{\rm LR}-\alpha}
        \le C t^{-\alpha_0}
        +\frac{2[-\log(1-\eta)]}{m\log\rho}.
\end{equation}
\end{corollary}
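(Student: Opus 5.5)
The estimate will follow from the triangle inequality
\[
        \abs{\widetilde\alpha_{\rm LR}-\alpha}
        \le \abs{\widetilde\alpha_{\rm LR}-\alpha_{\rm LR}}
        +\abs{\alpha_{\rm LR}-\alpha},
\]
where \(\alpha_{\rm LR}\) is the estimator \eqref{eq:log-ratio-estimator} built from the exact data \(d_1=M_\alpha(t)\), \(d_2=M_\alpha(\rho t)\). The first term is the noise part and the second is the asymptotic bias; I will bound them separately.

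First I choose \(T\ge T_{I,\rho}\) from Theorem \ref{thm:ratio}. Then for \(t\ge T\) and \(\alpha\in I\), both \(d_1\) and \(d_2\) are nonzero, so \(\alpha_{\rm LR}\) is well defined. Since \(\eta<1\), the perturbed values \(\widetilde d_j\) are also nonzero and have the same signs. Proposition \ref{prop:relative-noise} then gives directly
\[
        \abs{\widetilde\alpha_{\rm LR}-\alpha_{\rm LR}}\le \frac{2[-\log(1-\eta)]}{m\log\rho}.
\]
This bound is uniform in \(t\).

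For the bias term, I will write \eqref{eq:ratio-asymp} as \(\abs{R_\alpha(t,\rho)}=\rho^{-m\alpha}\abs{1+q_\alpha(t)}\), where \(\abs{q_\alpha(t)}\le C_0t^{-\alpha_0}\) uniformly for \(\alpha\in I\), \(t\ge T_{I,\rho}\). Then
\[
        \alpha_{\rm LR}=-\frac{\log\abs{R_\alpha(t,\rho)}}{m\log\rho}
        =\alpha-\frac{\log\abs{1+q_\alpha(t)}}{m\log\rho}.
\]
If necessary I will enlarge \(T\) so that \(C_0T^{-\alpha_0}\le 1/2\). Using \(\abs{\log(1+q)}\le 2\abs{q}\) for \(\abs{q}\le1/2\), this yields
\[
        \abs{\alpha_{\rm LR}-\alpha}\le \frac{2C_0}{m\log\rho}\,t^{-\alpha_0},
\]
so I take \(C=2C_0/(m\log\rho)\). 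Combining the two bounds gives \eqref{eq:combined-error}.

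The only delicate point is uniformity: \(C_0\) must not depend on \(\alpha\). This holds because the \(O(t^{-\alpha_0})\) in \eqref{eq:ratio-asymp} is uniform on \(I\). That uniformity in turn rests on \(\min_I\abs{S_mc_m}>0\) and on the uniform remainder in \eqref{eq:M-first-nonzero}, which together bound the relative errors \(r_\alpha(t)\) and \(r_\alpha(\rho t)\) in \eqref{eq:M-relative}. The constant \(C_0\) therefore depends only on \(I\), \(\rho\), and the forward data through \(S_m\) and \(\sum_n\abs{a_n}/\lambda_n^{m+1}\), as the statement requires.
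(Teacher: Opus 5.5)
Your proof is correct and matches the paper's argument: the triangle inequality splits the error into an asymptotic bias, controlled by the uniform remainder in \eqref{eq:ratio-asymp}, and a noise term, controlled by Proposition~\ref{prop:relative-noise}. The only difference is that you spell out the step $\lvert\log(1+q)\rvert\le 2\lvert q\rvert$ for $\lvert q\rvert\le 1/2$, which the paper leaves implicit when it asserts $\abs{\alpha_{\rm LR}-\alpha}\le Ct^{-\alpha_0}$.
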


\begin{proof}
Equation \eqref{eq:ratio-asymp} and the uniform smallness of its remainder give
\[
        \abs{\alpha_{\rm LR}-\alpha}\le Ct^{-\alpha_0}
\]
for all sufficiently large \(t\), uniformly for \(\alpha\in I\). Combine this estimate with Proposition \ref{prop:relative-noise} and the triangle inequality.
\end{proof}

\begin{remark}[Fixed additive noise]\label{rem:additive-noise}
The preceding estimate is not a time-uniform bound for fixed additive sensor errors. If \(\widetilde d_j=d_j+\delta_j\), then the corresponding relative errors are \(\varepsilon_j=\delta_j/d_j\). Under the hypotheses of Theorem \ref{thm:ratio}, \(\abs{d_j}\asymp t^{-m\alpha}\), so a fixed absolute error may produce \(\abs{\varepsilon_j}\asymp \abs{\delta_j}t^{m\alpha}\). Consequently, the log-ratio method should be used while both measurements remain safely above the additive noise floor.
\end{remark}

\section{Sharpness and counterexamples for the abstract theory}\label{sec:sharpness-abstract}

The preceding results are asymptotic and conditional. This section tests the hypotheses by examples. Except where a hypothesis is explicitly removed, the finite-dimensional constructions satisfy Assumptions \ref{ass:A} and \ref{ass:moment-finite}; hence they are genuine models within the abstract framework. A counterexample below is directed at a strengthened statement, not at the theorem as stated.

\subsection{Summability and grouping}

\begin{example}[Why equal eigenvalues must be grouped]\label{ex:sharp-grouping}
Let \(H=\R^2\), \(\cA=I\), \(\varphi=(1,1)\), and \(h=(1,-1)\). Then the two ungrouped coefficients are nonzero, but
\[
        M_\alpha(t)=E_{\alpha,1}(-t^\alpha)-E_{\alpha,1}(-t^\alpha)=0,
        \qquad
        S_k=1-1=0
\]
for every \(k\ge1\). The only distinct eigenvalue is \(1\), and its grouped coefficient is \(B=1-1=0\). Thus Lemma \ref{lem:moment-completeness} would be false if nonzero individual coefficients were confused with a nontrivial grouped observation.
\end{example}

\begin{example}[Failure without weighted summability]\label{ex:sharp-summability}
Let \(H=\ell^2\), \(\cA e_n=n e_n\), and consider the formal coefficients \(a_n=1\). Then \(\cA\) is strictly positive, self-adjoint, and has compact resolvent, but
\[
        \sum_{n=1}^\infty \frac{|a_n|}{\lambda_n}
        =\sum_{n=1}^\infty\frac1n=\infty.
\]
For every fixed \(t>0\) and \(\alpha\in(0,1)\), the first large-argument term gives
\[
        nE_{\alpha,1}(-nt^\alpha)
        \longrightarrow \frac{t^{-\alpha}}{\Gamma(1-\alpha)}>0.
\]
Consequently \(\sum_nE_{\alpha,1}(-nt^\alpha)\) diverges by comparison with the harmonic series. Some summability condition is therefore necessary for a general scalar spectral observation to be well defined.
\end{example}

\subsection{The reciprocal-gamma exceptional set}

\begin{proposition}[Sharpness of the exceptional order for \(m=2\)]\label{prop:sharp-gamma}
Let
\[
        H=\R^2,
        \qquad
        \cA=\mathrm{diag}(1,2),
        \qquad
        \varphi=(1,1),
        \qquad
        h=(1,-2).
\]
Then
\begin{equation}\label{eq:sharp-two-mode-M}
        M_\alpha(t)
        =E_{\alpha,1}(-t^\alpha)
        -2E_{\alpha,1}(-2t^\alpha)
\end{equation}
and the cancellation index is \(m=2\). For every
\(0<\alpha_-<1/2<\alpha_+<1\), there exists \(T>1\) such that, for all \(t\ge T\),
\begin{equation}\label{eq:sharp-opposite-derivatives}
        \partial_\alpha M_{\alpha_-}(t)>0,
        \qquad
        \partial_\alpha M_{\alpha_+}(t)<0.
\end{equation}
Hence \(\alpha\mapsto M_\alpha(t)\) is not injective on \([\alpha_-,\alpha_+]\) for such \(t\). At the exceptional order \(\alpha=1/2\),
\begin{equation}\label{eq:sharp-half-asymptotic}
        M_{1/2}(t)
        =-\frac{3}{8\sqrt\pi}\,t^{-3/2}+O(t^{-2}),
\end{equation}
and therefore
\begin{equation}\label{eq:sharp-half-ratio}
        \frac{M_{1/2}(\rho t)}{M_{1/2}(t)}
        \longrightarrow \rho^{-3/2},
        \qquad t\to\infty,
\end{equation}
not \(\rho^{-2\alpha}=\rho^{-1}\).
\end{proposition}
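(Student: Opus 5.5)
The plan is to compute the data explicitly and then read everything off from Theorem \ref{thm:obs-expansion}, which needs neither the reciprocal-gamma exclusion nor anything beyond Assumptions \ref{ass:A} and \ref{ass:moment-finite}. Here $a_1=\varphi_1h_1=1$ and $a_2=\varphi_2h_2=-2$, with $\lambda_1=1$, $\lambda_2=2$. This gives \eqref{eq:sharp-two-mode-M} at once, and
\[
        S_k=1-2\cdot 2^{-k}=1-2^{1-k},
        \qquad
        S_1=0,\quad S_2=\tfrac12,\quad S_3=\tfrac34 .
\]
So the two distinct eigenvalues carry nonzero grouped coefficients and the cancellation index is $m=2$. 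Fix a compact interval $I=[\alpha_0,\alpha_1]\Subset(0,1)$ containing $\alpha_-$, $1/2$ and $\alpha_+$; the finite-dimensional model satisfies all hypotheses of Theorem \ref{thm:obs-expansion} on $I$.

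\textbf{Opposite derivatives.} I apply \eqref{eq:dM-expansion-N} with $N=2$. Since $S_1=0$,
\[
        \partial_\alpha M_\alpha(t)=\tfrac12 t^{-2\alpha}\bigl(c_2'(\alpha)-2c_2(\alpha)\log t\bigr)+O\bigl((1+\log t)t^{-3\alpha}\bigr),
        \qquad c_2(\alpha)=-\frac{1}{\Gamma(1-2\alpha)} .
\]
The signs of $c_2$ are as follows.
\begin{itemize}[label=$\cdot$]
\item For $\alpha_-<1/2$ we have $1-2\alpha_->0$, so $\Gamma(1-2\alpha_-)>0$ and $c_2(\alpha_-)<0$.
\item For $\alpha_+\in(1/2,1)$ we have $1-2\alpha_+\in(-1,0)$, where $\Gamma<0$, so $c_2(\alpha_+)>0$.
\end{itemize}
At each of the two fixed orders $\alpha_\pm$, the term $-c_2(\alpha_\pm)t^{-2\alpha_\pm}\log t$ dominates the bounded term $c_2'(\alpha_\pm)t^{-2\alpha_\pm}/2$. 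It also dominates the remainder, since $(1+\log t)t^{-3\alpha}=o(t^{-2\alpha}\log t)$. This yields \eqref{eq:sharp-opposite-derivatives} for all $t$ beyond some $T$; the maximum of the two thresholds suffices.

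\textbf{Non-injectivity.} For $t\ge T$, the function $g(\alpha)=M_\alpha(t)$ is $C^1$ on $[\alpha_-,\alpha_+]$. Because $g'(\alpha_-)>0$ and $g'(\alpha_+)<0$, $g$ attains its maximum at an interior point $\alpha^\ast$, and $g$ takes values below $g(\alpha^\ast)$ just to the left and just to the right of it. Pick a level $c$ strictly between $\max\{g(\alpha_-),g(\alpha_+)\}$ and $g(\alpha^\ast)$. The intermediate value theorem then gives one point in $(\alpha_-,\alpha^\ast)$ and one in $(\alpha^\ast,\alpha_+)$ with $g=c$, so $g$ is not injective.

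\textbf{The exceptional order.} I apply \eqref{eq:M-expansion-N} with $N=3$ at $\alpha=1/2$. Since $S_1=0$ and $c_2(1/2)=-1/\Gamma(0)=0$, only the $k=3$ term survives:
\[
        c_3(1/2)=\frac{1}{\Gamma(-1/2)}=-\frac{1}{2\sqrt\pi},
        \qquad
        S_3c_3(1/2)=-\frac{3}{8\sqrt\pi}.
\]
The remainder is $O(t^{-4\alpha})=O(t^{-2})$, which proves \eqref{eq:sharp-half-asymptotic}. The leading coefficient is nonzero, so dividing the expansion at $\rho t$ by the expansion at $t$ gives \eqref{eq:sharp-half-ratio}.

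\textbf{Main obstacle.} The only delicate point is the sign bookkeeping of $1/\Gamma$ across its zero at $1-2\alpha=0$, together with the observation that Theorem \ref{thm:obs-expansion} (unlike Theorem \ref{thm:first-nonzero}) applies on intervals containing the exceptional point. Everything else is explicit evaluation.
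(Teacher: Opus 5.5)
Your proposal is correct and matches the paper's proof essentially step for step. It uses the same moments $S_1=0$, $S_2=\tfrac12$, $S_3=\tfrac34$, the same sign analysis of $c_2$ on either side of $1/2$ in the $N=2$ derivative expansion, and the same $N=3$ expansion at $\alpha=1/2$ with $c_3(1/2)=-1/(2\sqrt\pi)$. The only differences are cosmetic. You cite Theorem \ref{thm:obs-expansion} directly, which makes explicit that no reciprocal-gamma exclusion is needed there. You also obtain non-injectivity from an interior maximum and the intermediate value theorem, whereas the paper uses the fact that a continuous injective function on an interval is strictly monotone.
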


\begin{proof}
The moments are
\[
        S_k=1-\frac{2}{2^k}=1-2^{1-k},
\]
so \(S_1=0\), \(S_2=1/2\), and \(S_3=3/4\). Moreover,
\[
        c_2(\alpha)=-\frac1{\Gamma(1-2\alpha)}
\]
is negative for \(0<\alpha<1/2\) and positive for \(1/2<\alpha<1\). Formula \eqref{eq:dM-first-nonzero} therefore gives, at each fixed nonexceptional order,
\[
        \partial_\alpha M_\alpha(t)
        \sim -2S_2c_2(\alpha)t^{-2\alpha}\log t
        =-c_2(\alpha)t^{-2\alpha}\log t.
\]
This proves \eqref{eq:sharp-opposite-derivatives}. A continuous injective function on an interval is strictly monotone; a differentiable increasing function cannot have a negative derivative, and a differentiable decreasing function cannot have a positive derivative. Thus the opposite derivative signs rule out injectivity on any interval crossing \(1/2\).

At \(\alpha=1/2\), the entire continuation of the reciprocal gamma function gives \(c_2(1/2)=0\), while
\[
        c_3(1/2)=\frac1{\Gamma(-1/2)}=-\frac1{2\sqrt\pi}.
\]
Taking three terms in Theorem \ref{thm:obs-expansion} gives
\[
        M_{1/2}(t)
        =S_3c_3(1/2)t^{-3/2}+O(t^{-2}),
\]
which is \eqref{eq:sharp-half-asymptotic}; division at \(\rho t\) and \(t\) yields \eqref{eq:sharp-half-ratio}.
\end{proof}

Proposition \ref{prop:sharp-gamma} shows that the excluded reciprocal-gamma orders are not merely an artifact of the proof. At such an order the first nonzero resolvent moment need not determine the first nonzero asymptotic term, and both monotonicity and the ratio exponent can change.

\subsection{The late-time qualifier and finite-time zeros}

\begin{example}[A one-mode reversal]\label{ex:sharp-early-time}
Fix \(\lambda>0\) and \(\alpha_*\in(0,1)\). The power series of the Mittag-Leffler function gives, as \(t\downarrow0\),
\[
        E_{\alpha,1}(-\lambda t^\alpha)
        =1-\frac{\lambda t^\alpha}{\Gamma(1+\alpha)}+O(t^{2\alpha}).
\]
Differentiating at the fixed order \(\alpha_*\) gives
\[
\begin{aligned}
        \left.\partial_\alpha E_{\alpha,1}(-\lambda t^\alpha)
        \right|_{\alpha=\alpha_*}
        &=-\frac{\lambda t^{\alpha_*}}{\Gamma(1+\alpha_*)}
        \bigl(\log t-\psi(1+\alpha_*)\bigr)\\
        &\quad+O(t^{2\alpha_*}|\log t|).
\end{aligned}
\]
The derivative is positive for all sufficiently small \(t\), whereas Corollary \ref{cor:one-mode} makes it negative for all sufficiently large \(t\). Thus one-mode monotonicity is genuinely eventual rather than an all-time property.
\end{example}

\begin{example}[The ratio can be undefined at a finite time]\label{ex:sharp-finite-zero}
Fix \(\alpha_*\in(0,1)\). Choose \(t_*>0\) sufficiently small and put
\[
        p_*=E_{\alpha_*,1}(-t_*^{\alpha_*}),
        \qquad
        q_*=E_{\alpha_*,1}(-2t_*^{\alpha_*}).
\]
Since \(p_*,q_*\to1\) as \(t_*\downarrow0\), we may require \(q_*>p_*/2\). With
\[
        \cA=\mathrm{diag}(1,2),
        \qquad
        a_1=q_*,
        \qquad
        a_2=-p_*,
\]
one has
\[
        M_{\alpha_*}(t_*)=q_*p_*-p_*q_*=0,
        \qquad
        S_1=q_*-\frac{p_*}{2}>0.
\]
Thus the first-moment hypotheses hold, but the denominator of the ratio vanishes at the prescribed finite time. Theorem \ref{thm:ratio} correctly asserts nonvanishing only after a configuration-dependent late-time threshold.
\end{example}

\subsection{Stationary modes, the cancellation index, and noise}

\begin{example}[An unsubtracted stationary mode]\label{ex:sharp-kernel}
If strict positivity is removed, let
\[
        \cA=\mathrm{diag}(0,1),
        \qquad
        M_\alpha(t)=1+E_{\alpha,1}(-t^\alpha).
\]
Then
\[
        \frac{M_\alpha(\rho t)}{M_\alpha(t)}\longrightarrow1,
\]
not \(\rho^{-\alpha}\). After subtracting the stationary component, the corrected observation is \(E_{\alpha,1}(-t^\alpha)\), whose ratio does tend to \(\rho^{-\alpha}\). This is the reason for the stationary-mode qualification following Assumption \ref{ass:A}.
\end{example}

The leading two-time ratio identifies only the product \(m\alpha\). For example, \((m,\alpha)=(1,0.6)\) and \((m,\alpha)=(2,0.3)\) both produce \(\rho^{-0.6}\). Thus the requirement in Remark \ref{rem:m-known} that \(m\) be computed from the known forward configuration is substantive; \(m\) and \(\alpha\) cannot in general be separated from the leading ratio alone.

\begin{proposition}[Failure under fixed additive noise]\label{prop:sharp-additive-noise}
Assume the hypotheses of Theorem \ref{thm:ratio}, fix the true order \(\alpha\in I\), and let
\[
        d_1=M_\alpha(t),
        \qquad
        d_2=M_\alpha(\rho t).
\]
For any fixed \(\delta>0\), define noisy data
\(\widetilde d_1=d_1+\delta\) and \(\widetilde d_2=d_2+\delta\). Then
\[
        -\frac1{m\log\rho}
        \log\left|\frac{\widetilde d_2}{\widetilde d_1}\right|
        \longrightarrow0,
        \qquad t\to\infty.
\]
Hence no time-uniform consistency statement for the log-ratio estimator is possible under fixed nonzero additive noise.
\end{proposition}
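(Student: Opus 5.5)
By \eqref{eq:M-relative}, which comes from Theorem~\ref{thm:first-nonzero} through \eqref{eq:M-first-nonzero}, both data values tend to zero uniformly. Specifically,
\[
        d_1=M_\alpha(t)=S_mc_m(\alpha)t^{-m\alpha}\bigl(1+r_\alpha(t)\bigr),
        \qquad
        d_2=M_\alpha(\rho t)=S_mc_m(\alpha)\rho^{-m\alpha}t^{-m\alpha}\bigl(1+r_\alpha(\rho t)\bigr),
\]
where \(r_\alpha=O(t^{-\alpha_0})\). Since \(\alpha\) is fixed in \(I\) and \(m\alpha>0\), this gives \(\abs{d_j}\le C t^{-m\alpha}\to0\) for \(j=1,2\) as \(t\to\infty\).

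Next, because \(\delta>0\) is fixed, \(\widetilde d_j=\delta+d_j\to\delta\neq0\). So for all sufficiently large \(t\), say \(\abs{d_j}\le\delta/2\), both noisy values lie in \([\delta/2,3\delta/2]\). They are therefore nonzero and positive, and the estimator is well defined. Continuity of the logarithm on \((0,\infty)\) then gives
\[
        \log\left|\frac{\widetilde d_2}{\widetilde d_1}\right|
        =\log(\delta+d_2)-\log(\delta+d_1)\longrightarrow\log\delta-\log\delta=0.
\]
This can be made quantitative: \(\abs{\log(1+d_j/\delta)}\le 2\abs{d_j}/\delta\) once \(\abs{d_j}\le\delta/2\), which yields \(O(\delta^{-1}t^{-m\alpha})\). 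Multiplying by the constant \(-1/(m\log\rho)\) proves the limit.

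For the concluding sentence, I note that the limiting value \(0\) lies outside \(I\Subset(0,1)\). Its distance from the true order \(\alpha\) is at least \(\alpha_0>0\). Hence, for every true order, the noisy estimator eventually stays at distance at least \(\alpha_0/2\) from \(\alpha\), however small \(\delta>0\) is. This excludes any bound of the form \eqref{eq:combined-error} in which a fixed additive noise level replaces the relative level \(\eta\) uniformly in \(t\). It is consistent with Remark~\ref{rem:additive-noise}, because the relative errors \(\varepsilon_j=\delta/d_j\) blow up.

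The only genuinely delicate point is justifying that \(\widetilde d_j\) is eventually bounded away from zero, so that the logarithm is harmless. That follows entirely from the uniform decay already established in Theorem~\ref{thm:ratio}. Everything else is routine continuity.
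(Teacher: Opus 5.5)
Your proof is correct and follows the paper's argument. The expansion \eqref{eq:M-relative} forces $d_1,d_2\to0$, so $\widetilde d_1,\widetilde d_2\to\delta>0$ and the logarithm of their quotient tends to zero; your quantitative $O(\delta^{-1}t^{-m\alpha})$ bound and your observation that the limit $0$ stays at distance at least $\alpha_0$ from the true order are sound refinements that make the concluding sentence precise.
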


\begin{proof}
The asymptotic expansion of Theorem \ref{thm:ratio} implies \(d_1,d_2\to0\). Therefore \(\widetilde d_1,\widetilde d_2\to\delta\), their quotient tends to \(1\), and the displayed logarithm tends to zero.
\end{proof}

The restriction \(\eta<1\) in Proposition \ref{prop:relative-noise} guarantees that the logarithmic estimator is defined for every admissible perturbation satisfying \(\abs{\varepsilon_j}\le\eta\). For an individual realization, it is enough that \(1+\varepsilon_j>0\) for \(j=1,2\).

\section{Application to anomalous heat conduction in a finite rod}\label{sec:rod}

We first fix the scaling convention. Let \(t_{\rm ph}\) and \(x_{\rm ph}\) denote physical time and position, choose reference scales \(\tau_*>0\) and \(\ell_*>0\), and consider the dimensionally consistent physical model
\[
        \tau_*^{\alpha-1}\partial_{t_{\rm ph}}^\alpha u
        -\kappa_{\rm ph}u_{x_{\rm ph}x_{\rm ph}}
        +q_{\rm ph}u=0.
\]
With
\[
        t=\frac{t_{\rm ph}}{\tau_*},\qquad
        x=\frac{x_{\rm ph}}{\ell_*},\qquad
        \kappa=\frac{\kappa_{\rm ph}\tau_*}{\ell_*^2},\qquad
        q=q_{\rm ph}\tau_*,
\]
and with \(L\) denoting the nondimensional rod length, the equation takes the form below. In particular, every occurrence of \(\log t\) means \(\log(t_{\rm ph}/\tau_*)\).

Let \(L>0\), \(\kappa>0\), and \(q\ge0\). Consider the nondimensional one-dimensional model
\begin{equation}\label{eq:rod-pde}
\begin{cases}
        \partial_t^\alpha u(t,x)-\kappa u_{xx}(t,x)+q u(t,x)=0,
            & t>0,\ 0<x<L,\\
        u(t,0)=u(t,L)=0, & t>0,\\
        u(0,x)=\varphi(x), & 0<x<L.
\end{cases}
\end{equation}
Here \(\alpha\) is the unknown memory parameter, while \(\kappa\) and \(q\) are known nondimensional spatial and leakage coefficients obtained from the chosen reference scales. The spatial operator is
\begin{equation}\label{eq:rod-operator}
        \cA=-\kappa\frac{\dd^2}{\dd x^2}+q,
        \qquad
        \Dom(\cA)=H^2(0,L)\cap H_0^1(0,L).
\end{equation}
Its eigenpairs are
\begin{equation}\label{eq:rod-spectrum}
        \lambda_n=\kappa\left(\frac{n\pi}{L}\right)^2+q,
        \qquad
        v_n(x)=\sqrt{\frac{2}{L}}\sin\left(\frac{n\pi x}{L}\right).
\end{equation}
Thus, for \(\varphi\in L^2(0,L)\),
\begin{equation}\label{eq:rod-solution}
        u_\alpha(t,x)=\sum_{n=1}^\infty
        \varphi_nE_{\alpha,1}(-\lambda_nt^\alpha)v_n(x),
        \qquad
        \varphi_n=\int_0^L\varphi(x)v_n(x)\dd x.
\end{equation}
For every \(t>0\), the solution belongs to \(\Dom(\cA)\) and hence has a continuous representative. Indeed, the standard estimate \(rE_{\alpha,1}(-r)\le C_I\), valid for \(r\ge0\) and \(\alpha\in I\Subset(0,1)\), implies \(\norm{\cA u_\alpha(t)}_{L^2(0,L)}\le C_I t^{-\alpha}\norm{\varphi}_{L^2(0,L)}\); see \cite{Pollard1948,GorenfloKilbasMainardiRogosin2020}. Point observations \(u_\alpha(t,x_*)\), \(x_*\in(0,L)\), are therefore meaningful.

For a fixed sensor \(x_*\in(0,L)\), the coefficients in \eqref{eq:M-def} are
\begin{equation}\label{eq:point-coefficients}
        a_n=\varphi_nv_n(x_*).
\end{equation}
They satisfy Assumption \ref{ass:moment-finite}. Indeed,
\begin{equation}\label{eq:point-summability}
        \sum_{n=1}^\infty\frac{\abs{\varphi_nv_n(x_*)}}{\lambda_n}
        \le
        \left(\sum_{n=1}^\infty\abs{\varphi_n}^2\right)^{1/2}
        \left(\sum_{n=1}^\infty\frac{\abs{v_n(x_*)}^2}{\lambda_n^2}\right)^{1/2}<\infty,
\end{equation}
because \(\lambda_n\asymp n^2\) and \(\abs{v_n(x_*)}\le\sqrt{2/L}\).

The first moment has a simple elliptic meaning. Let
\begin{equation}\label{eq:w-def}
        w=\cA^{-1}\varphi.
\end{equation}
Then \(w\in H^2(0,L)\cap H_0^1(0,L)\), and
\begin{equation}\label{eq:S1-point}
        S_1(x_*)=\sum_{n=1}^\infty
        \frac{\varphi_nv_n(x_*)}{\lambda_n}=w(x_*).
\end{equation}
Equivalently, \(w\) is the solution of
\begin{equation}\label{eq:elliptic-resolvent-rod}
\begin{cases}
        -\kappa w''(x)+q w(x)=\varphi(x), & 0<x<L,\\
        w(0)=w(L)=0.
\end{cases}
\end{equation}

\begin{theorem}[Point-sensor order identification in a rod]\label{thm:rod-point}
Let \(I=[\alpha_0,\alpha_1]\Subset(0,1)\), \(\varphi\in L^2(0,L)\), and \(x_*\in(0,L)\). If
\begin{equation}\label{eq:rod-nondegenerate}
        (\cA^{-1}\varphi)(x_*)\ne0,
\end{equation}
then there exists \(T_I>1\) such that, for every \(t_0\ge T_I\), the equation
\begin{equation}\label{eq:rod-one-measurement}
        u_\alpha(t_0,x_*)=d_0
\end{equation}
has at most one solution \(\alpha\in I\). The map \(\alpha\mapsto u_\alpha(t_0,x_*)\) is decreasing for large \(t_0\) if \((\cA^{-1}\varphi)(x_*)>0\), and increasing if \((\cA^{-1}\varphi)(x_*)<0\).
\end{theorem}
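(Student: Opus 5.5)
The plan is to reduce the statement to Corollary~\ref{cor:S1} (the case \(m=1\) of Theorem~\ref{thm:first-nonzero}) applied to the point-sensor observation \(M_\alpha(t)=u_\alpha(t,x_*)=\sum_n a_nE_{\alpha,1}(-\lambda_nt^\alpha)\) with \(a_n=\varphi_nv_n(x_*)\), as in \eqref{eq:point-coefficients}. The rod operator \eqref{eq:rod-operator} satisfies Assumption~\ref{ass:A}. It is self-adjoint with compact resolvent, and it is strictly positive since \(\lambda_1=\kappa\pi^2/L^2+q>0\) for \(q\ge0\). Its eigenvalues \eqref{eq:rod-spectrum} are simple, so no eigenspace grouping is needed. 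Assumption~\ref{ass:moment-finite} is exactly \eqref{eq:point-summability}. Hence Theorems~\ref{thm:obs-expansion} and \ref{thm:first-nonzero} apply once we know that \(S_1\ne0\) and that the point value of the spectral series really is the series \(M_\alpha\).

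The first step is to justify that for each \(t>0\) the continuous representative of \(u_\alpha(t,\cdot)\) evaluated at \(x_*\) equals \(\sum_n\varphi_nE_{\alpha,1}(-\lambda_nt^\alpha)v_n(x_*)\). By the rational bound \eqref{eq:ML-rational-bound} the coefficients are \(O(\lambda_n^{-1})=O(n^{-2})\) times \(\abs{\varphi_n}\). Since \(\abs{v_n}\le\sqrt{2/L}\), the series converges absolutely and uniformly in \(x\), so its sum is continuous and agrees with the \(L^2\) limit. The second step is the identification \eqref{eq:S1-point}, namely \(S_1(x_*)=w(x_*)\) with \(w=\cA^{-1}\varphi\). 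Here \(w=\sum_n(\varphi_n/\lambda_n)v_n\) in \(L^2\), and this series also converges uniformly on \([0,L]\) by Cauchy--Schwarz, as in \eqref{eq:point-summability}. Its uniform sum is therefore the continuous representative of \(w\in H^2\subset C([0,L])\), and pointwise evaluation gives \(S_1=w(x_*)\).

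With \eqref{eq:rod-nondegenerate} we have \(S_1\ne0\). Corollary~\ref{cor:S1} then gives \(T_I\) such that for \(t_0\ge T_I\) the map \(\alpha\mapsto u_\alpha(t_0,x_*)\) is strictly monotone on \(I\). The map is decreasing when \(S_1>0\) and increasing when \(S_1<0\), since by \eqref{eq:dM-first-nonzero} with \(m=1\) the derivative has the sign of \(-S_1c_1(\alpha)\), and \(c_1=1/\Gamma(1-\alpha)>0\). Uniqueness of the solution of \eqref{eq:rod-one-measurement} follows.

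The only step needing care is the pointwise identification of the two series (the solution at the sensor and \(S_1\) as \(w(x_*)\)), since \(\varphi\) is merely \(L^2\). I expect uniform convergence from the \(\lambda_n^{-1}\) weights together with \(\sum\lambda_n^{-2}<\infty\) to settle it, so no real obstacle is anticipated.
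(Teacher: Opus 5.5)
Your proposal is correct and follows the paper's proof. The paper verifies Assumption~\ref{ass:moment-finite} via \eqref{eq:point-summability}, identifies \(S_1\) with \((\cA^{-1}\varphi)(x_*)\) via \eqref{eq:S1-point}, and invokes Corollary~\ref{cor:S1}. Your Weierstrass M-test justification of the pointwise evaluations is a valid, slightly more elementary substitute for the \(\Dom(\cA)\)/Sobolev-embedding remarks the paper makes in the surrounding text.
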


\begin{proof}
For the point observation, \eqref{eq:point-summability} verifies Assumption \ref{ass:moment-finite}, and \eqref{eq:S1-point} identifies \(S_1\) with \((\cA^{-1}\varphi)(x_*)\). The conclusion follows from Corollary \ref{cor:S1}.
\end{proof}

The higher-moment cancellation theorem also has a pointwise version. Put \(w_m=\cA^{-m}\varphi\). Then \(w_m\in\Dom(\cA^m)\subset\Dom(\cA)\), and its spectral partial sums converge in the graph norm of \(\cA^m\), hence also in the graph norm of \(\cA\). Elliptic regularity makes the latter norm equivalent to the \(H^2(0,L)\)-norm on \(H^2(0,L)\cap H_0^1(0,L)\). Thus the expansion converges in \(H^2(0,L)\) and, by one-dimensional Sobolev embedding, uniformly on \([0,L]\). Therefore the value at \(x_*\) is meaningful, and
\[
        S_m(x_*)=\sum_{n=1}^\infty
        \frac{\varphi_nv_n(x_*)}{\lambda_n^m}
        =(\cA^{-m}\varphi)(x_*).
\]

\begin{corollary}[Point-sensor identification after cancellation]\label{cor:rod-higher-moment}
Let \(I=[\alpha_0,\alpha_1]\Subset(0,1)\), \(\varphi\in L^2(0,L)\), \(x_*\in(0,L)\), and \(m\in\N\). Suppose that
\begin{equation}\label{eq:rod-higher-moment-condition}
        (\cA^{-k}\varphi)(x_*)=0\quad (1\le k\le m-1),
        \qquad
        (\cA^{-m}\varphi)(x_*)\ne0,
\end{equation}
with the vanishing condition void for \(m=1\). If \(I\) satisfies \eqref{eq:no-gamma-zero}, then one sufficiently late point value \(u_\alpha(t_0,x_*)\) determines at most one \(\alpha\in I\). In addition, for every \(\rho>1\),
\[
        \frac{u_\alpha(\rho t,x_*)}{u_\alpha(t,x_*)}
        =\rho^{-m\alpha}(1+O(t^{-\alpha_0}))
\]
uniformly for \(\alpha\in I\), and the logarithmic stability estimate of Theorem \ref{thm:ratio} holds with this \(m\).
\end{corollary}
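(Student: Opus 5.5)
The corollary follows by recognizing the point observation as a scalar spectral observation of the form \eqref{eq:M-def}. Its coefficients are \(a_n=\varphi_nv_n(x_*)\) and its moments are given by the point values of \(\cA^{-k}\varphi\). Once this is in place, Theorem \ref{thm:first-nonzero} and Theorem \ref{thm:ratio} apply verbatim.

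First I check the hypotheses. Assumption \ref{ass:A} holds for the rod operator \eqref{eq:rod-operator}, since its eigenvalues \eqref{eq:rod-spectrum} are simple and satisfy \(\lambda_n\ge\kappa\pi^2/L^2>0\). Assumption \ref{ass:moment-finite} is exactly \eqref{eq:point-summability}. The rod spectrum is simple, so the grouping \eqref{eq:grouped-coefficients} is trivial: \(B_j=a_j\), and no cancellation inside eigenspaces arises.

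Next I identify \(u_\alpha(t,x_*)\) with \(M_\alpha(t)\). For fixed \(t>0\) the series \eqref{eq:rod-solution} converges in \(H^2(0,L)\). This follows from the bound \(\norm{\cA u_\alpha(t)}\le C_It^{-\alpha}\norm{\varphi}\), which gives graph-norm convergence of the partial sums and hence \(H^2\)-convergence by the elliptic regularity equivalence already used above. By Sobolev embedding the convergence is then uniform on \([0,L]\), so evaluation at \(x_*\) commutes with the sum and \(u_\alpha(t,x_*)=\sum_n a_nE_{\alpha,1}(-\lambda_nt^\alpha)\). By the same argument applied to \(w_m=\cA^{-m}\varphi\), carried out in the paragraph preceding the corollary, \(S_k=(\cA^{-k}\varphi)(x_*)\) for every \(k\ge1\). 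Hypothesis \eqref{eq:rod-higher-moment-condition} is then precisely \eqref{eq:first-nonzero-moment} for this observation.

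With \eqref{eq:no-gamma-zero} assumed on \(I\), Theorem \ref{thm:first-nonzero} gives strict monotonicity of \(\alpha\mapsto u_\alpha(t_0,x_*)\) for \(t_0\ge T_I\), and hence at most one solution \(\alpha\in I\). Theorem \ref{thm:ratio} gives nonvanishing of \(u_\alpha(t,x_*)\) and \(u_\alpha(\rho t,x_*)\) for large \(t\), the ratio asymptotics \(\rho^{-m\alpha}(1+O(t^{-\alpha_0}))\) uniformly on \(I\), and the logarithmic stability estimate \eqref{eq:ratio-stability}.

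The only real point to check is that pointwise evaluation commutes with the spectral sums, both in \(u_\alpha(t,\cdot)\) and in \(\cA^{-k}\varphi\), for merely \(L^2\) initial data. I would handle this through the \(H^2\)-convergence argument above. As an alternative, absolute convergence follows directly from \(\abs{v_n(x_*)}\le\sqrt{2/L}\), the bound \(\abs{E_{\alpha,1}(-r)}\le C_I/(1+r)\), and Cauchy–Schwarz with \(\sum\lambda_n^{-2}<\infty\).
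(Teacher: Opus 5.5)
Your proposal is correct and follows the paper's proof: identify \(S_k=(\cA^{-k}\varphi)(x_*)\) through the graph-norm and Sobolev-embedding argument preceding the corollary, then invoke Theorems \ref{thm:first-nonzero} and \ref{thm:ratio}. Your extra checks are sound and fill in points the paper leaves to the surrounding text. These are that the rod spectrum is simple, so grouping is trivial, and that \(u_\alpha(t,x_*)=\sum_n a_nE_{\alpha,1}(-\lambda_nt^\alpha)\), which follows from tail convergence via \(rE_{\alpha,1}(-r)\le C_I\) mode by mode, or from the uniform M-test.
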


\begin{proof}
The preceding identity gives \(S_k(x_*)=(\cA^{-k}\varphi)(x_*)\). The assertion follows from Theorems \ref{thm:first-nonzero} and \ref{thm:ratio}.
\end{proof}

The condition \eqref{eq:rod-nondegenerate} is much less restrictive than finite-mode sign assumptions. It is automatic for physically common non-negative initial temperature profiles.

\begin{corollary}[Positive initial temperature]\label{cor:positive-rod}
Assume \(\varphi\in L^2(0,L)\), \(\varphi\ge0\) almost everywhere, and \(\varphi\not\equiv0\). Then
\begin{equation}\label{eq:positive-resolvent}
        (\cA^{-1}\varphi)(x)>0,
        \qquad 0<x<L.
\end{equation}
Consequently, for every interior sensor \(x_*\in(0,L)\), one sufficiently late point temperature value determines at most one admissible \(\alpha\in I\); for data generated by the model, the admissible order is unique. The corresponding observation map is eventually strictly decreasing.
\end{corollary}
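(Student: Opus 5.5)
The plan is to reduce everything to Theorem \ref{thm:rod-point} (equivalently Corollary \ref{cor:S1}). The only new ingredient is the strict positivity \eqref{eq:positive-resolvent}. For it I will use the Green function representation of the elliptic resolvent \eqref{eq:elliptic-resolvent-rod}:
\[
        w(x)=(\cA^{-1}\varphi)(x)=\int_0^L G(x,y)\varphi(y)\dd y,
\]
where \(G\) is the Dirichlet Green function of \(-\kappa\,\dd^2/\dd x^2+q\) on \((0,L)\).

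\textbf{Construction of \(G\).} I will write \(G\) explicitly. Let \(\omega=\sqrt{q/\kappa}\) and let \(s(x)\) be \(\sinh(\omega x)\) if \(q>0\) and \(x\) if \(q=0\). Then
\[
        G(x,y)=\frac{s(\min\{x,y\})\,s(L-\max\{x,y\})}{\kappa\,W},
\]
where \(W>0\) is the appropriate Wronskian constant. This constant equals \(\omega\sinh(\omega L)\) when \(q>0\), and \(L\) when \(q=0\). Since \(s>0\) on \((0,L]\), it follows that \(G(x,y)>0\) for all \(x,y\in(0,L)\).

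I will check that this integral formula really gives \(\cA^{-1}\varphi\) for \(\varphi\in L^2\). The map \(\varphi\mapsto\int G\varphi\) produces a function in \(H^2\cap H_0^1\) solving \eqref{eq:elliptic-resolvent-rod}. By uniqueness, which holds since \(\cA\) is strictly positive, it coincides with \(w\). Because \(w\) is continuous, the pointwise values are the genuine ones, consistent with \eqref{eq:S1-point}.

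\textbf{Positivity of \(w\).} Fix \(x\in(0,L)\). The function \(y\mapsto G(x,y)\) is continuous and strictly positive on \((0,L)\). The function \(\varphi\) is nonnegative a.e. and is nonzero on a set of positive measure. Hence the integral of \(G(x,\cdot)\varphi\) is strictly positive, which gives \eqref{eq:positive-resolvent}. As an alternative, I could use the strong maximum principle applied to \(w\in C^1\) together with a weak formulation, but the explicit kernel avoids regularity subtleties.

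\textbf{Conclusion.} Given \(S_1(x_*)=w(x_*)>0\), Theorem \ref{thm:rod-point} applies at every interior sensor. It yields, for \(t_0\ge T_I\), that \(\alpha\mapsto u_\alpha(t_0,x_*)\) is strictly decreasing on \(I\), so it is injective there. If the data \(d_0\) are generated by the model with some true \(\alpha^\dagger\in I\), then a solution exists, and injectivity makes it the unique one. Note that \(m=1\), so no reciprocal-gamma exclusion is needed, by Corollary \ref{cor:S1}.

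The only step requiring some care is identifying the integral operator with \(\cA^{-1}\) on all of \(L^2\), rather than only on smooth data. I will handle this by density together with the continuity of both sides from \(L^2\) into \(C([0,L])\). All remaining steps are direct invocations of earlier results.
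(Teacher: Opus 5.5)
Your proposal is correct and follows essentially the same route as the paper: strict positivity of the Dirichlet Green function gives $(\cA^{-1}\varphi)(x)>0$ on $(0,L)$, and Theorem~\ref{thm:rod-point} (with $m=1$, so no reciprocal-gamma exclusion) then yields eventual strict decrease and uniqueness. Your explicit kernel $G(x,y)=s(\min\{x,y\})\,s(L-\max\{x,y\})/(\kappa W)$ and your identification of the integral operator with $\cA^{-1}$ on all of $L^2$ spell out the step that the paper simply cites from the one-dimensional maximum principle.
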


\begin{proof}
The Dirichlet Green function of \(-\kappa\dd^2/\dd x^2+q\), \(q\ge0\), is strictly positive in \((0,L)\times(0,L)\), as also follows from the one-dimensional maximum principle \cite[Ch. 6]{Evans2010}. Hence
\[
        (\cA^{-1}\varphi)(x)=\int_0^LG(x,y)\varphi(y)\dd y>0
\]
for every \(x\in(0,L)\), because \(\varphi\ge0\) and is not identically zero. The uniqueness statement follows from Theorem \ref{thm:rod-point}.
\end{proof}

\begin{theorem}[Two-time point-sensor log-ratio estimate in a rod]\label{thm:rod-ratio}
Let the assumptions of Theorem \ref{thm:rod-point} hold and fix \(\rho>1\). Then, for all sufficiently large \(t\), the ratio
\begin{equation}\label{eq:rod-ratio}
        R_\alpha^{\rm rod}(t,\rho)=
        \frac{u_\alpha(\rho t,x_*)}{u_\alpha(t,x_*)}
\end{equation}
is well-defined and satisfies
\begin{equation}\label{eq:rod-ratio-asymp}
        R_\alpha^{\rm rod}(t,\rho)=\rho^{-\alpha}(1+O(t^{-\alpha_0}))
\end{equation}
uniformly for \(\alpha\in I\). Moreover,
\begin{equation}\label{eq:rod-ratio-stability}
        \abs{\alpha-\widetilde\alpha}
        \le \frac{2}{\log\rho}
        \left\lvert
        \log\left\lvert\frac{u_\alpha(\rho t,x_*)}{u_\alpha(t,x_*)}\right\rvert
        -
        \log\left\lvert\frac{u_{\widetilde\alpha}(\rho t,x_*)}{u_{\widetilde\alpha}(t,x_*)}\right\rvert
        \right\rvert
\end{equation}
for all \(\alpha,\widetilde\alpha\in I\).
\end{theorem}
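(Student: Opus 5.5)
The plan is to reduce Theorem \ref{thm:rod-ratio} to the abstract two-time result, Theorem \ref{thm:ratio}, in the case \(m=1\). This means checking that the point observation \(M_\alpha(t)=u_\alpha(t,x_*)\) satisfies every hypothesis of Theorem \ref{thm:first-nonzero} with \(m=1\).

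First, Assumption \ref{ass:A} holds for the rod operator \eqref{eq:rod-operator}. It is self-adjoint with compact resolvent, and it is strictly positive because \(\lambda_n\ge\kappa\pi^2/L^2>0\) even when \(q=0\). Its eigenvalues \eqref{eq:rod-spectrum} are simple, so grouping is trivial. Next, the coefficients \(a_n=\varphi_nv_n(x_*)\) satisfy Assumption \ref{ass:moment-finite} by \eqref{eq:point-summability}. By \eqref{eq:S1-point}, the first moment is \(S_1=(\cA^{-1}\varphi)(x_*)\), which is nonzero by hypothesis \eqref{eq:rod-nondegenerate}. Hence the cancellation index is \(m=1\). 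Finally, condition \eqref{eq:no-gamma-zero} is automatic: \(c_1(\alpha)=1/\Gamma(1-\alpha)>0\) on \((0,1)\), so no orders need to be excluded (Corollary \ref{cor:S1}).

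One point needs care before invoking Theorem \ref{thm:ratio}: the pointwise value \(u_\alpha(t,x_*)\) must actually equal the spectral series \(\sum_n a_nE_{\alpha,1}(-\lambda_nt^\alpha)\). This is the only step that is not pure bookkeeping. For \(t>0\), the series \eqref{eq:rod-solution} converges in the graph norm of \(\cA\), because \(\lambda_nE_{\alpha,1}(-\lambda_nt^\alpha)\le C_It^{-\alpha}\) and the coefficients \(\varphi_n\) are square summable. By one-dimensional elliptic regularity it therefore converges in \(H^2(0,L)\), and by Sobolev embedding uniformly on \([0,L]\). So evaluation at \(x_*\) commutes with the sum. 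This is the same argument given just before Corollary \ref{cor:rod-higher-moment}. Absolute convergence of the scalar series follows from \eqref{eq:ML-rational-bound} together with \eqref{eq:point-summability}.

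With these hypotheses verified, Theorem \ref{thm:ratio} with \(m=1\) gives the conclusions directly. It yields a threshold beyond which \(u_\alpha(t,x_*)\) and \(u_\alpha(\rho t,x_*)\) are nonzero, so \eqref{eq:rod-ratio} is well defined. It gives the asymptotics \eqref{eq:rod-ratio-asymp} from \eqref{eq:ratio-asymp}. It gives the stability bound \eqref{eq:rod-ratio-stability} from \eqref{eq:ratio-stability} with \(2/(m\log\rho)=2/\log\rho\). Equivalently, one can cite Corollary \ref{cor:rod-higher-moment} with \(m=1\), where the vanishing conditions are void. I expect no real obstacle beyond the justification of pointwise evaluation described above.
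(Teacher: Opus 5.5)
Your proposal is correct and follows the paper's proof, which simply notes that $S_1=(\cA^{-1}\varphi)(x_*)\ne0$ and applies Theorem~\ref{thm:ratio} with $m=1$. Your graph-norm and Sobolev-embedding justification that $u_\alpha(t,x_*)$ equals the scalar spectral series makes explicit a step the paper leaves implicit.
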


\begin{proof}
Theorem \ref{thm:rod-point} assumes \(S_1=(\cA^{-1}\varphi)(x_*)\ne0\), so Theorem \ref{thm:ratio} applies with \(m=1\).
\end{proof}

Thus, from two measured values
\[
        d_1=u_\alpha(t,x_*),
        \qquad
        d_2=u_\alpha(\rho t,x_*),
\]
one obtains the leading estimator
\begin{equation}\label{eq:rod-estimator}
        \alpha_{\rm LR}
        =-\frac{1}{\log\rho}\log\left\lvert\frac{d_2}{d_1}\right\rvert.
\end{equation}
This formula is not meant to replace solving the exact nonlinear equation; rather, it provides a scale-free initialization and a transparent interpretation of the relative data.

\begin{example}[Uniform initial temperature, no leakage]\label{ex:constant-temp}
Let \(q=0\) and \(\varphi(x)=\Theta_0\), where \(\Theta_0>0\). Then \(w=\cA^{-1}\varphi\) solves
\[
        -\kappa w''(x)=\Theta_0,
        \qquad w(0)=w(L)=0,
\]
and therefore
\begin{equation}\label{eq:constant-w}
        w(x)=\frac{\Theta_0}{2\kappa}x(L-x)>0,
        \qquad 0<x<L.
\end{equation}
Hence every interior temperature sensor satisfies the non-degeneracy condition \eqref{eq:rod-nondegenerate}. One late reading gives uniqueness on \(I\), while two late readings give the log-ratio approximation \eqref{eq:rod-estimator}.
\end{example}

\begin{example}[Cancellation at the first moment]\label{ex:cancellation}
It is possible that \((\cA^{-1}\varphi)(x_*)=0\), for instance when signed initial data are arranged so that positive and negative contributions cancel at the sensor. The theory does not necessarily fail. If
\[
        (\cA^{-1}\varphi)(x_*)=0,
        \qquad
        (\cA^{-2}\varphi)(x_*)\ne0,
\]
and the admissible interval \(I\) avoids \(\alpha=1/2\), then Corollary \ref{cor:rod-higher-moment} applies with \(m=2\). A single sufficiently late point measurement is again unique, and the two-time ratio behaves as \(\rho^{-2\alpha}\). This illustrates why higher resolvent moments are useful: cancellation of the first moment is not by itself an obstruction to order identification.
\end{example}

\section{Sharpness and counterexamples for point sensors}\label{sec:sharpness-rod}

The point-sensor assumptions in Section \ref{sec:rod} are likewise structural rather than cosmetic.

\begin{example}[A sensor at a nodal point]\label{ex:sharp-nodal-sensor}
Let \(L=\kappa=1\), \(q=0\), \(\varphi=v_2=\sqrt2\sin(2\pi x)\), and place the sensor at \(x_*=1/2\). Then
\[
        u_\alpha(t,x)
        =E_{\alpha,1}(-4\pi^2t^\alpha)v_2(x),
\]
so
\[
        u_\alpha(t,1/2)=0
\]
for every \(t>0\) and every \(\alpha\in(0,1)\). In fact,
\[
        (\cA^{-k}\varphi)(1/2)
        =(4\pi^2)^{-k}v_2(1/2)=0,
        \qquad k\ge1.
\]
Thus no order can be identified from this sensor, and the nondegeneracy condition in Theorem \ref{thm:rod-point}, or a higher-moment replacement, cannot simply be omitted. Similarly, a boundary sensor at \(x_*=0\) or \(x_*=L\) records zero for every Dirichlet solution, which explains the restriction to interior sensors.
\end{example}

\begin{example}[Loss of strict positivity]\label{ex:sharp-rod-kernel}
If the assumption \(q\ge0\) is removed, take \(L=\kappa=1\) and \(q=-\pi^2\). Then the first Dirichlet eigenvalue of
\(\cA=-\dd^2/\dd x^2-\pi^2\) is zero. For \(\varphi=v_1=\sqrt2\sin(\pi x)\),
\[
        u_\alpha(t,x)=v_1(x)
\]
for all \(t\) and all \(\alpha\). Every interior sensor away from the nodes sees a nonzero signal that is completely independent of the fractional order. This is the rod counterpart of Example \ref{ex:sharp-kernel}.
\end{example}

\begin{remark}[Why the arbitrary-\(L^2\) point-sensor statement is dimension dependent]\label{rem:sharp-dimension}
The rod theorem uses the one-dimensional embedding \(H^2(0,L)\hookrightarrow C[0,L]\). It cannot be transferred unchanged to arbitrary \(L^2\)-data in all dimensions. For example, let \(\Omega=B_1(0)\subset\R^d\) with \(d\ge5\), let \(\cA=-\Delta+I\) with Dirichlet boundary condition, and choose \(\chi\in C_c^\infty(\Omega)\) equal to one near the origin. If
\[
        0<\beta<\frac{d-4}{2},
        \qquad
        w(x)=\chi(x)|x|^{-\beta},
\]
then \(w\in H^2(\Omega)\cap H_0^1(\Omega)\) but \(w\) is unbounded at the origin. Setting \(\varphi=\cA w\in L^2(\Omega)\) gives \(\cA^{-1}\varphi=w\), whose value at the proposed sensor is not finite. Additional regularity, averaging, or a lower-dimensional setting is therefore required for point observations of arbitrary \(L^2\)-initial data.
\end{remark}

\section{Computational illustration of the log-ratio estimator}\label{sec:numerical}

This section is included only to illustrate the inverse reconstruction suggested by Theorem \ref{thm:rod-ratio}; the proofs above are independent of the computation. Take
\[
        L=1,\qquad \kappa=1,\qquad q=0,
        \qquad \varphi(x)=1,
        \qquad x_*=\frac12,
\]
and use two point measurements with \(\rho=2\). The normalized eigenfunctions are
\[
        v_n(x)=\sqrt{2}\sin(n\pi x),
        \qquad \lambda_n=n^2\pi^2 .
\]
For this data set only odd modes contribute at the midpoint, and
\[
        a_n=\varphi_n v_n(1/2)
        =\frac{4}{n\pi}(-1)^{(n-1)/2},
        \qquad n\text{ odd}.
\]
Moreover \((\cA^{-1}\varphi)(1/2)=1/8\), so the first resolvent moment is non-zero and the leading ratio is \(2^{-\alpha}\).

The synthetic data in Table \ref{tab:ratio-numerics} were obtained by truncating the spectral series to odd modes \(n\le121\). The accompanying script \texttt{reproduce\_log\_ratio\_table.py} evaluates the completely monotone representation
\[
E_{\alpha,1}(-z)=\int_0^\infty e^{-r z^{1/\alpha}}
        \frac{1}{\pi}
        \frac{r^{\alpha-1}\sin(\pi\alpha)}{r^{2\alpha}+2r^\alpha\cos(\pi\alpha)+1}\,\dd r,
        \qquad z>0,
\]
through the equivalent smoothed form
\[
E_{\alpha,1}(-z)=\frac{\sin(\pi\alpha)}{\pi\alpha z}
\int_0^\infty
\frac{e^{-s^{1/\alpha}}}{1+2s\cos(\pi\alpha)/z+s^2/z^2}\,\dd s.
\]
The script uses \texttt{scipy.integrate.quad} on \([0,\infty)\) with absolute and relative tolerances \(2\times10^{-13}\) and subdivision limit \(200\). It writes the table values to a CSV file and optionally repeats the computation with odd modes \(n\le241\) as a modal-refinement check. No modes beyond the stated truncation are included in the displayed point temperatures. The reported reconstruction is
\[
        \alpha_{\rm LR}(t)
        =-\frac{1}{\log 2}
        \log\left|\frac{u_\alpha(2t,1/2)}{u_\alpha(t,1/2)}\right| .
\]

\begin{table}[ht]
\centering
\caption{Two-time reconstruction from the odd-mode truncation \(n\le121\) for the rod model.}
\label{tab:ratio-numerics}
\begin{tabular}{ccccc}
\toprule
True order & \(t\) & \(u_\alpha(t,1/2)\) & \(u_\alpha(2t,1/2)/u_\alpha(t,1/2)\) & \(\alpha_{\rm LR}(t)\) \\
\midrule
0.35 & 20  & \(3.10979\times10^{-2}\) & 0.787509 & 0.3446 \\
0.35 & 80  & \(1.92703\times10^{-2}\) & 0.786369 & 0.3467 \\
0.35 & 320 & \(1.19104\times10^{-2}\) & 0.785677 & 0.3480 \\
\addlinespace
0.60 & 20  & \(9.39932\times10^{-3}\) & 0.658330 & 0.6031 \\
0.60 & 80  & \(4.07658\times10^{-3}\) & 0.659121 & 0.6014 \\
0.60 & 320 & \(1.77160\times10^{-3}\) & 0.659476 & 0.6006 \\
\addlinespace
0.80 & 20  & \(2.50809\times10^{-3}\) & 0.571446 & 0.8073 \\
0.80 & 80  & \(8.20798\times10^{-4}\) & 0.573397 & 0.8024 \\
0.80 & 320 & \(2.70056\times10^{-4}\) & 0.574036 & 0.8008 \\
\bottomrule
\end{tabular}
\end{table}

The point values in the table are truncated-series outputs rounded to six significant digits; they are not presented as certified full-series values to all displayed digits. The ratios and four-decimal estimators are unchanged at the displayed precision under further modal refinement. The table shows the behavior predicted by the theory: as the observation time increases, the quotient approaches \(2^{-\alpha}\) and the log-ratio estimate approaches the true fractional order. In practical inverse computations, \(\alpha_{\rm LR}\) can be used as a scale-free initial guess for solving the nonlinear equation based on a more accurate forward model.

\section{Conclusion}\label{sec:conclusion}

We proved that late-time scalar observations of time-Caputo diffusion are organized by resolvent moments. A moment-completeness argument shows that every nontrivial grouped spectral observation satisfying the summability assumption has a finite cancellation index \(m\). The first nonzero moment \(S_m\) determines the eventual sign of \(\partial_\alpha M_\alpha(t)\), provided the associated reciprocal-gamma coefficient does not vanish on the admissible interval. This yields uniqueness from one sufficiently late scalar measurement and covers exact cancellation beyond the standard condition \(S_1\ne0\).

For bounded Hilbert-space observations, \(S_m=\inner{\cA^{-m}\varphi}{h}\), giving an intrinsic operator-theoretic criterion. For one-dimensional heat conduction, point observations are covered for arbitrary \(L^2\)-initial data, and the leading condition is \((\cA^{-1}\varphi)(x_*)\ne0\). Positivity of the Dirichlet Green function makes this condition automatic for nonnegative, nonzero initial temperature at every interior sensor. The near-cancellation remark also shows why the eventual index may be difficult to observe over a finite time window when a lower moment is small but not exactly zero.

The two-time ratio removes the leading amplitude from the data variable. If \(S_m\) is the first nonzero moment, then \(M_\alpha(\rho t)/M_\alpha(t)\sim\rho^{-m\alpha}\). The integer \(m\) is computed from the known forward configuration. Corollary \ref{cor:combined-error} combines the asymptotic bias with a time-uniform relative-noise term, while fixed additive noise remains limited by the algebraically decaying signal amplitude. For \(m=1\), the estimator is a discrete logarithmic-slope version of earlier late-time reconstruction formulas; the new feature is its extension to signed observations and arbitrary finite cancellation index.

The sharpness sections show that the principal qualifications are intrinsic. A two-mode admissible observation loses monotonicity across the reciprocal-gamma zero at \(\alpha=1/2\), and its ratio exponent changes at the exceptional order. Further examples show that monotonicity is only eventual, finite-time denominators can vanish, stationary modes must be removed, fixed additive noise defeats the late-time ratio, and point sensors require both nondegeneracy and sufficient spatial regularity.

Possible extensions include simultaneous recovery of \(\alpha\) and uncertain forward coefficients, statistical selection of observation times under an additive noise floor, and analogous resolvent-moment criteria for multi-term or distributed-order fractional diffusion.

\section*{Acknowledgments and declarations}

The author gratefully acknowledges the support of the Science Committee of the Ministry of Science and Higher Education of the Republic of Kazakhstan, Grant No. AP23487589.

\medskip
\noindent\textbf{Competing interests.} The author declares that there are no known competing financial interests or personal relationships that could have appeared to influence the work reported in this paper.

\medskip
\noindent\textbf{Data and code availability.} No external data sets were used. The companion script \texttt{reproduce\_log\_ratio\_table.py}, included as ancillary material, generates the synthetic values in Section \ref{sec:numerical}, writes them to CSV, and includes an optional modal-refinement check.

\end{document}